\newtheorem{theorem}{Theorem}
\newtheorem{lemma}{Lemma}
\newtheorem{proposition}{Proposition}
\newtheorem{corollary}{Corollary}
\theoremstyle{definition}
\newtheorem{example}{Example}
\newtheorem{remark}{Remark}
\newcommand{\Hom}{{\operatorname{Hom}}}
\newcommand{\sHom}{\underline{\operatorname{Hom}}}
\newcommand{\HH}{{\operatorname{HH}^*}}
\newcommand{\HHF}{{\operatorname{HH}^1}}
\newcommand{\sHH}{{\underline{\operatorname{HH}}^*}}
\newcommand{\Ext}{{\operatorname{Ext}}}
\newcommand{\sExt}{\underline{\operatorname{Ext}}}
\newcommand{\RHom}{{\operatorname{RHom}}}
\newcommand{\sing}{\underline{\operatorname{D}}^{\sf b}}
\newcommand{\dcat}{{\operatorname{D}}}
\newcommand{\dbcat}{{\operatorname{D}^{\sf b}}}
\newcommand{\csing}{\underline{\operatorname{D}}^{\sf B}}
\newcommand{\cdbcat}{\operatorname{D}^{\sf B}}
\renewcommand{\flat}{\operatorname{Flat}}
\newcommand{\idim}{\operatorname{injdim}}
\renewcommand{\O}{\mathcal{O}}
\newcommand{\B}{\mathcal{B}}
\newcommand{\coder}{\operatorname{coDer}}
\newcommand{\susp}{{\mathtt{\Sigma}}}
\renewcommand{\-}{\text{-}}
\newcommand{\op}{\textsf{op}}
\newcommand{\ev}{\textsf{ev}}
\title[Stable Invariance of Restricted Structure]{Stable Invariance of the Restricted Lie Algebra Structure of Hochschild Cohomology}
\author{Benjamin Briggs}
\author{Lleonard Rubio y Degrassi}
\subjclass[2010]{16E40, 16D90 (Primary) 17B50, 13D03, 18D50 (Secondary)}
\begin{document}

\begin{abstract}
 We show that the restricted Lie algebra structure on Hochschild cohomology is invariant under stable equivalences of Morita type between self-injective algebras. Thereby we obtain a number of positive characteristic stable invariants, such as the $p$-toral rank of $\mathrm{HH}^1(A,A)$. 
    We also prove a more general result concerning Iwanaga-Gorenstein algebras, using a more general notion of stable equivalences of Morita type. Several applications are given to commutative algebra and modular representation theory. 
    
    These results are proven by first establishing the stable invariance of the $B_\infty$-structure of the Hochschild cochain complex. In the appendix we explain how the  $p$-power operation on Hochschild cohomology can be seen as an artifact of this $B_\infty$-structure. In particular, we establish well-definedness of the $p$-power operation, following  some---originally topological---methods due to May, Cohen and Turchin, using the language of operads. 
\end{abstract}

\maketitle
It is a central theme in representation theory to understand what structure possessed by a finite dimensional associative algebra is shared by algebras with equivalent module categories, or derived categories, or stable module categories. 
Stable equivalences  
are both more frequent 
and less understood than derived equivalences, 
and for this reason it is desirable to have as many invariants under stable equivalence as possible.

Hochschild cohomology possesses a great deal of structure, and this is an important sourse of invariants. Perhaps most famously, it is a Gerstenhaber algebra, and this is enriched by a $B_{\infty}$-algebra structure on the Hoschschild cochain complex. For more specific classes of algebras, more structure appears. 
The invariance of these structures has been studied by various authors. 
For derived equivalences, Keller 
showed the invariance of the Gerstanhaber structure \cite{KellerIII}, 
and the invariance of $B_{\infty}$-structure of the
cochain complex \cite{KellerI}.

To better understand stable equivalences, Brou\'e introduced stable equivalences of Morita type in \cite{B}, 
and Keller and Vossieck \cite{KV} and Rickard \cite{Ri} proved that derived equivalences give rise to stable equivalences of Morita type for self-injective algebras. Given such an equivalence connecting two  self-injective 
algebras $A$ and $B$, Xi showed that the
Hochschild cohomology groups $\mathrm{HH}^n(A,A)$ and $\mathrm{HH}^n(B,B)$ are 
isomorphic  for $n\geq 1$ \cite{X}. In a similar vein, K\"onig, Liu and Zhou proved established  the invariance of the Batalin–Vilkovisky structure on stable Hochschild cohomology for symmetric algebras \cite{KLZ}.

 Over a field of positive characteristic, Hochschild cohomology 
 possesses also the structure of a  restricted graded Lie 
 algebra. 
In this context Linckelmann asked the following question: 
\emph{is the restricted Lie algebra structure of $ {\rm HH}^{>0}(A,A)$ invariant under stable equivalences of Morita type for self-injective algebras?} 
The invariance of this structure is delicate because the $p$-power structure is \emph{non-linear}, and 
therefore difficult to handle functorially. A partial answer was given by the second author in \cite{RyD} for the subclass of integrable derivations. We answer Linckelmann's question affirmatively:

\begin{theorem}
\label{stableinvariancetheorem} 
Let $A$ and $B$ be finite dimensional, self-injective algebras over a field of positive characteristic. If $A$ and $B$ are stably equivalent of Morita type, then the induced transfer map on Hochschild cohomology gives an isomorphism $ {\rm HH}^{>0}(A,A)\cong {\rm HH}^{>0}(B,B)$ of  \underline{restricted} graded Lie algebras.
\end{theorem}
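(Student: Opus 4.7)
The strategy rests on two pillars, as signalled by the abstract. The first is operadic: by the results explained in the appendix (after May, Cohen, and Turchin), the restricted $p$-power on $\mathrm{HH}^{>0}(A,A)$ is completely determined by the $B_\infty$-algebra structure on the Hochschild cochain complex $C^*(A,A)$, and any $B_\infty$-quasi-isomorphism between Hochschild cochain complexes induces an isomorphism of restricted graded Lie algebras on positive-degree cohomology. Thus it suffices to produce a zig-zag of $B_\infty$-quasi-isomorphisms between $C^*(A,A)$ and $C^*(B,B)$ that realises Xi's transfer isomorphism on cohomology.

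To construct such a zig-zag, I would start from Keller's realisation of $C^*(A,A)$ as $\RHom_{A^e}(A,A)$ equipped with its canonical $B_\infty$-enhancement, whose $B_\infty$-quasi-isomorphism class depends only on the pair $(\dbcat(A^e), A)$. Let $(M,N)$ be the bimodules inducing the stable equivalence, so that $M \otimes_B N \cong A \oplus P$ and $N \otimes_A M \cong B \oplus Q$ with $P$ and $Q$ projective bimodules. After replacing $M$ and $N$ by projective bimodule resolutions, the convolution functors $M \otimes_B (-) \otimes_B N$ and $N \otimes_A (-) \otimes_A M$ should lift to morphisms (at worst $A_\infty$-morphisms compatible with the brace operations) between the two Hochschild cochain complexes, up to a correction term supported in Hochschild degree zero by the projective summands.

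The main obstacle will be controlling these projective error terms at the level of $B_\infty$-operations rather than only at the level of cohomology: the higher brace operations interact non-trivially with the projective summands, so the clean chain-level comparison available in the derived equivalence case is not directly applicable. The natural framework for resolving this is singular (or stable) Hochschild cohomology: for self-injective (and more generally Iwanaga--Gorenstein) algebras it inherits an intrinsic $B_\infty$-structure that agrees with the usual one in positive degrees, while in the singularity category of bimodules the projective correction terms $P$ and $Q$ vanish identically. The plan is therefore to route the $B_\infty$-comparison through singular Hochschild cohomology, where $M$ genuinely induces an equivalence that can be enhanced to a $B_\infty$-quasi-isomorphism by a resolution argument in the spirit of Keller. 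Once this $B_\infty$-quasi-isomorphism has been established, the operadic machinery of the appendix delivers the isomorphism of restricted graded Lie algebras and completes the proof.
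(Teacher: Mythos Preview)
Your high-level strategy is right: exploit the $B_\infty$-structure via Theorem~\ref{E2restrictedLie}, and pass through singular Hochschild cohomology to kill the projective correction terms. But the proposed mechanism for producing the $B_\infty$ zig-zag has a genuine gap. The convolution functors $M\otimes_B(-)\otimes_B N$ and $N\otimes_A(-)\otimes_A M$ do \emph{not} naturally lift to $B_\infty$-morphisms between Hochschild cochain complexes; the brace operations are defined by plugging cochains into one another, and there is no evident way to make tensoring with an external bimodule compatible with this. Your hope that ``at worst $A_\infty$-morphisms compatible with the brace operations'' exist is precisely the hard part, and you have not indicated how to build them. Likewise, appealing to a $B_\infty$-structure on the \emph{singular} cochain complex (as in Wang's work) does not by itself manufacture $B_\infty$-maps between $C^*(A,A)$ and $C^*(B,B)$; it only shifts the problem.

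The paper avoids this difficulty entirely by a different device: Keller's auxiliary category $T$ built from $A$, $B$, and $M$ (Section~\ref{SEMTsection}). The point is that $A\hookrightarrow T$ and $B\hookrightarrow T$ are \emph{fully faithful embeddings}, and restriction along such embeddings is a genuine strict $B_\infty$-map $C^*(T,T)\to C^*(A,A)$, $C^*(T,T)\to C^*(B,B)$ (Section~\ref{FunctorialitySubsec}). No homotopical lifting is needed. The role of the singularity category is then purely cohomological: Theorem~\ref{keytheorem} shows that these restriction maps become isomorphisms on $\sHH$, and Lemma~\ref{Gorlemma} identifies $\HH^{>d}$ with $\sHH^{>d}$. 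The restricted Lie structure is transported along the span $\HH(A,A)\leftarrow \HH(T,T)\rightarrow \HH(B,B)$ of honest $B_\infty$-restrictions, not along any direct map built from $M$ and $N$. Proposition~\ref{transferprop} then checks that the resulting isomorphism agrees with the classical transfer.
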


Our main result applies to a pair of Iwanaga Gorenstein algebras (see Theorem \ref{generalVersion}).
In fact, we prove a much more general result about about stable invariance of the $B_\infty$-structure on $C^*(A,A)$ by extending arguments of Keller.

Theorem \ref{stableinvariancetheorem} yields nontrivial information in the problem of distinguishing stable 
equivalence classes, because the same graded Lie algebra can admit different restricted 
structures (for example, for abelian Lie algebras see \cite[Chapter 3]{StraFar}).

\begin{corollary}\label{resinvscor}
The following are invariant under stable equivalences of Morita type between finite dimensional self-injective algebras:
\begin{itemize}
    \item the restricted Lie algebra ${\rm HH}^1(A,A)$;
    \item the restricted Lie algebra ${\sf Z}({\rm HH}^1(A,A))$;
    \item the restricted universal enveloping algebras of ${\rm HH}^1(A,A)$ and of ${\sf Z}({\rm HH}^1(A,A))$;
    \item the maximal dimension of a $p$-toral subalgebra of ${\rm HH}^1(A,A)$.
    
\end{itemize}
\end{corollary}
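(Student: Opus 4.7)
The plan is to derive each of the four invariants directly from Theorem \ref{stableinvariancetheorem}, which provides an isomorphism $\varphi\colon \operatorname{HH}^{>0}(A,A)\xrightarrow{\sim}\operatorname{HH}^{>0}(B,B)$ of (shifted) restricted graded Lie algebras. Since $\varphi$ is graded, it restricts in each positive degree to an isomorphism. In degree one, the shifted Gerstenhaber bracket is an ordinary Lie bracket and the $[p]$-operator preserves $\operatorname{HH}^1$, so $\varphi_1\colon \operatorname{HH}^1(A,A)\to\operatorname{HH}^1(B,B)$ is an isomorphism of restricted Lie algebras in the classical sense. This handles the first bullet.

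For the second bullet, the key observation is that the center $\mathsf{Z}(L)$ of a restricted Lie algebra $L$ is automatically a restricted Lie subalgebra, because for $x\in\mathsf{Z}(L)$ the Jacobson identity gives $[x^{[p]},y]=\operatorname{ad}(x)^{p}(y)=0$ for all $y\in L$. Any isomorphism of restricted Lie algebras sends centers to centers, so applying $\varphi_1$ yields an isomorphism $\mathsf{Z}(\operatorname{HH}^1(A,A))\cong\mathsf{Z}(\operatorname{HH}^1(B,B))$ of restricted Lie algebras.

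For the third bullet, the restricted universal enveloping algebra $u(-)$ is a functor from restricted Lie algebras to associative algebras, so applying it to the isomorphisms obtained in the first two bullets gives the claimed isomorphisms of restricted universal enveloping algebras. For the fourth bullet, a $p$-toral (or torus) subalgebra is an abelian restricted Lie subalgebra on which the $[p]$-operator is bijective, equivalently one spanned by toral elements after extension of scalars. The property of being $p$-toral is defined entirely in terms of the restricted Lie structure, so $\varphi_1$ induces a bijection between the sets of $p$-toral subalgebras of $\operatorname{HH}^1(A,A)$ and $\operatorname{HH}^1(B,B)$, preserving dimensions.

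No step of this deduction presents an obstacle; all of the real work is absorbed into Theorem \ref{stableinvariancetheorem}. Once the $[p]$-operator is known to be transported by the transfer isomorphism, each listed invariant is a formal consequence of the general yoga of restricted Lie algebras.
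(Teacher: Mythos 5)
Your proposal is correct and matches the paper's (implicit) argument: the paper derives Corollary \ref{resinvscor} formally from Theorem \ref{stableinvariancetheorem}, exactly as you do, by restricting the isomorphism of shifted restricted graded Lie algebras to its degree-one part (which is closed under the $p$-power map since degree $1$ sits in shifted even degree $0$) and then noting that the centre, the restricted enveloping algebra, and maximal $p$-toral subalgebras are all determined by the restricted Lie structure. Your observation that $\mathsf{Z}(L)$ is a restricted subalgebra via $\operatorname{ad}(x^{[p]})=\operatorname{ad}(x)^p$ is the only point needing any argument, and it is right.
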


Corollary \ref{resinvscor} yields important information because the Lie algebra  $\HHF(A)$ is known to contain significant information 
about $A$; see \cite{LiRu1,Chang,LiRu2,ER,RSS,BKL2}. In \cite{BR} the present authors characterise the $p$-toral subalgebras $\HHF(A,A)$ in terms of the fundamental groups associated to presentations in $A$;  Corollary \ref{resinvscor} is then critical in using this to construct new invariants under stable equivalences of Morita type.

As well as the $p$-power operation, there is another invariant defined on the even Hochschild cohomology classes which does not seem to have been used in representation theory.  This operation is analogous to the Cohen-Dyer-Lashof operations from topology. We show that the extra structure is also invariant under stable equivalences of Morita type---see Remark \ref{CDLoperation}.

We provide some applications to modular representation 
theory and commutative algebra. In section \ref{tamesym} we consider the classification of algebras of dihedral, semi-dihedral and quaternion type up to stable equivalence of Morita type, as started by Zhou, Zimmermann and  Taillefer in \cite{ZZ,ZI,Taillefer} who employed a number of   invariants, 
 including the Lie structure of $\mathrm{HH}^1$,   to distinguish most of the algebras. We compute 
the restricted Lie structure of most of the symmetric algebras of semi-dihedral and quaternion type that have not been classified. The restricted structure
fails to distinguish classes, providing some evidence that in the remaining cases derived  and stable classification should  coincide. The second application is based on the work of 
\cite{BKL}; as a consequence of Theorem \ref{stableinvariancetheorem} we obtain an 
isomorphism as restricted Lie algebras of $\mathrm{HH}^1$ for all non-nilpotent blocks with defect two and 
their Brauer correspondents. Lastly, in Theorem \ref{commutativetheorem} we apply our results to
commutative Gorenstein rings. In particular we show that Kn\"orrer periodicity induces an isomorphism of restricted Lie algebras in Hochschild cohomology.

One of the main insights is to view $p$-power structure on Hochschild cohomology as an artifact of the $B_\infty$-structure on the Hochschild cochain complex (this idea arises in topology \cite{Cohen,Tourtchine}). The usefulness of this perspective is evidenced by the functoriality in Theorem \ref{E2restrictedLie}. This is related to a conjecture of Keller that the $B_\infty$-structure on the Hochschild cohomology of $A$ can be obtained directly from the enhanced singularity category of $A$; see \cite[1.2]{KellerII} for a precise statement.

Even though this work is aimed at modular representation theorists, the proof of Theorem \ref{stableinvariancetheorem} consists of combines concepts from a number of different areas, and we have tried to include all of the needed background.

To show that the $p$-power operation is well-defined it seems necessary to involve some operadic machinery. We do this in the appendix, following work of May and Cohen in topology. The results therein serve as general template for building cohomology operations, and so we hope this appendix might be a useful reference, since these arguments do not seem to have been written down before in their algebraic form.

\tableofcontents

\noindent
{\bf Conventions.} Throughout $k$ is a commutative noetherian ring, and everything is taken to be $k$-linear. Undecorated tensor products 
are taken over $k$, and many of the objects we will discuss (notably bimodules and Hochschild cohomology) depend implicitly 
on $k$. Not much is lost in imagining $k$ to be a field. We use the notation $(\susp M)_i= M_{i-1}$ for the  shift of a graded object $M$.

\noindent
{\bf Acknowledgements.}
We are happy to thank Victor Turchin for explaining to us some facts about cohomology operations (see the appendix). We are thankful as well to Bernhard Keller for several helpful correspondences, and for pointing out an inaccurate statement about the existence of stable restriction functors (see Section \ref{MoreFunctorialitySubsec}). Lastly, we thank Markus Linckelmann for his support, his comments and for posing the question about the invariance of the $p$-power map.

\medskip

\noindent
{\bf Funding.}
The second author has been  supported by the Fundaci\'on `S\'eneca' of Murcia
(19880/GERM/15) and by the INdAM postdoctoral research grant 2019-2020.

\section{The Structure and Functoriality of Hochschild Cohomology}\label{structureofHHSection}

We will borrow an argument from Keller \cite{KellerI} and exploit the functoriality of Hochschild cohomology 
for fully faithful embeddings of 
categories. This is useful even if we start with  algebras, but since it adds 
no extra difficulty we may as well work with categories (or ``algebras with many objects" \cite{Mitchell}) from the beginning.

If $A$ is a small $k$-linear 
category then we use the notation ${}_b A_a=\Hom_A(a,b)$ for brevity. 
A right dg $A$-module $M$ consists of a complex $M_a$ of  $k$-modules for each object $a$ and an action 
$M_b\otimes {}_bA_a\to M_a$ for objects $a$ and $b$ of $A$, associative and unital in the natural sense. Left modules, and bimodules, 
are defined similarly. In particular, there is for each object $a$ the (representable) right $A$-module ${}_aA$ and 
the (corepresentable) left $A$-module $A_a$. These play the role of free modules.  In addition, $A$ is  naturally a 
bimodule over itself. See \cite{Toen} for more details on modules over (dg) 
categories and their derived categories.

Always $\dcat(A)$ denotes the full derived category of $A$, and $\dcat(A\- B)$ is the derived category of 
$A\- B$ dg bimodules (with $k$ acting centrally). In particular $\dcat(A\- A)\simeq \dcat(A^\ev)$ where 
$A^\ev=A^\op\otimes A$.

The Hochschild cohomology of $A$ is $\HH(A,A)= {\rm End}_{\dcat(A\- A)}(A)=\Ext_{A\- A}(A,A)$. Note that this presents  
$\HH(A,A)$ as a graded $k$-algebra.

\subsection{B\texorpdfstring{$_\infty$}{}-structure} Hochschild cohomology enjoys a great deal more structure  than its associative product alone, 
and this structure manifests most clearly at the chain level. Thus, going back to \cite{Mitchell} 
one defines the Hochschild cochain complex of the category $A$ to be
\[
C^n(A,A)=\Hom(A^{\otimes n},A)=\prod_{a_n,...,a_0}\Hom({}_{a_n}A_{a_{n-1}}\otimes \cdots \otimes {}_{a_1}A_{a_{0}},{}_{a_{n}}A_{a_0}),
\]
where $a_i$ for $0\leq i\leq n$ are objects of $A$. The Hochschild differential has its classical formula. As long as $A$ is projective over $k$, the cohomology of $C^*(A,A)$ canonically computes $\HH(A,A)$.

The cup product in this context is also given by the classical formula: if $x\in \Hom({}_{a_n}A_{a_{n-1}}\otimes \cdots \otimes {}_{a_1}A_{a_{0}},{}_{a_{n}}A_{a_0})$ and $y\in\Hom({}_{b_m}A_{b_{m-1}}\otimes \cdots \otimes {}_{b_1}A_{b_{0}},{}_{b_{m}}A_{b_0})$, with $a_0=b_m$, then $
x\smile y $ is the linear map
\[
{}_{a_n}A_{a_{n-1}}\otimes \cdots \otimes {}_{b_1}A_{b_{0}} \xrightarrow{x\otimes y} {}_{a_{n}}A_{a_0}\otimes{}_{b_{m}}A_{b_0}\xrightarrow{\rm comp.}  {}_{a_{n}}A_{b_0}
\]
(the product is zero if $a_0\neq b_m$). Altogether this makes $C^*(A,A)$ into a dg algebra.

The next series of operations may at first seem mysterious, but all we really need from them are the following two facts: firstly, the \emph{restricted} Lie algebra structure on $\HH(A,A)$ is naturally built from them; and secondly, the operations are preserved by the restriction maps  discussed in the next subsection.

The \emph{brace operations} on $C=C^*(A,A)$ are, for each $s\geq 1$, linear maps
\[
(\-)\{\-,\dots,\-\} \colon C\otimes C^{\otimes s}\to C
\]
of degree $-s$. Thus, if $x\in C^n(A,A)$ and $y_i\in C^{n_i}(A,A)$ then
\[
x\{y_1,\dots,y_s\} \in C^{n+n_1+\cdots+ n_s-s}(A,A).
\]
To define these operations, we consider all ways of applying the $y_i$ in parallel: if $m_0+n_1+m_1+\cdots+ m_{s-1}+n_s+m_s= n+n_1+\cdots +n_s-s$, then denote
\[
y^{(\underline{m},\underline{n})}\colon A^{\otimes(n+n_1+\cdots+ n_s-s)}\xrightarrow{1^{\otimes m_0}\otimes y_1 \otimes 1^{\otimes m_1} \cdots 1^{\otimes m_{s-1}}\otimes y_s \otimes 1^{\otimes m_s} } A^{\otimes n}.
\]
Then we take the signed sum of these plugged into $x$, so
\[
x\{y_1,\dots,y_n\} = \sum_{\underline{m}}(-1)^{(m_0+n_1+\cdots +m_{s-1})(n_s+1) + \cdots + (m_0)(n_1+1)} y^{(\underline{m},\underline{n})} x.
\]
(We have suppressed the objects of $A$ here for readability---as in the formula for the cup products we need all 
the objects to ``match up'' to get a nonzero component.)

For example, if ${\bf m}\colon A^{\otimes 2}\to A$ is the multiplication (or composition) on $A$, then the 
Hochschild differential is given by $\partial(x)={\bf m}\{x\}+(-1)^{|x|}x\{{\bf m}\}$, 
and the cup product is given by $x\smile y ={\bf m}\{x,y\}$. 

The binary operation $(\-)\{\-\}$ coincides with Gerstenhaber's \emph{circle product} $x\{y\} = x\circ y$. This is the only brace operation we need in order to discuss the restricted 
Lie algebra structure of $\HH(A,A)$.

These brace operations were introduced by Getzler in \cite{Getzler}. The various compatibility conditions between them can be summed up by saying that they induce a product on the \emph{bar construction} $BC=\bigoplus_i (\susp C)^{\otimes i}$ which makes $BC$ into a dg Hopf algebra. By definition, this makes $C^*(A,A)$ an example of a  \emph{$B_{\infty}$-algebra}. See  Getzler and Jones \cite[Section 5.2]{GJ} or Gerstenhaber and Voronov \cite[section 3.2]{GV} for details. 

As usual, much of this was anticipated by Gerstenhaber long ago: what is roughly the same structure on the Hochschild chain 
complex was used in \cite{Gerst} under the name ``composition complex". The cochain algebra of a topological space, and the 
cobar construction of a Hopf algebra, were also given as examples of composition complexes. These would both now be 
considered standard examples of $B_{\infty}$-algebras. 

It has been proven by McClure and Smith that a $B_\infty$-algebra is essentially the same thing as an $E_2$-algebra \cite{McClureSmith}, answering Deligne's question about the  structure of $C^*(A,A)$.

\subsection{Functoriality}\label{FunctorialitySubsec}
Let $A$ and $B$ be categories which are flat over $k$ (i.e.~all of the hom-spaces are flat as $k$-modules) and let $\phi:A\to B$ is a fully faithful embedding. Then there is a restriction functor 
$R_\phi: \dcat(B)\to \dcat(A)$ which sends a $B$-module $M$ to the $A$-module with components $(R_\phi M)_a=M_{\phi(a)}$. 
This has a left adjoint $L_\phi:\dcat(A)\to \dcat(B)$ given by $L_\phi(M)= M\otimes^{\sf L}_BA$. Since 
$R_\phi L_\phi\simeq {\rm id}_{\dcat(A)}$ the functor $L_\phi$ is fully-faithful.

From $\phi $ we get a fully faithful embedding $\phi^\ev :A^\ev\to B^\ev$ and hence a restriction 
$R_{\phi^\ev}:\dcat(B\- B)\to \dcat(A\- A)$. Since $R_{\phi^\ev}(B)=A$, this induces a homomorphism 
${\rm End}_{\dcat(B\- B)}(B)\to {\rm End}_{\dcat(A\- A)}(A)$ of graded rings. 
Identifying these rings with the Hochschild cohomology of $A$ and $B$ respectively we have a homomorphism
\[
\phi^*: \HH(B,B)\to \HH(A,A).
\]
This homomorphism has been exploited by Keller, who observed in \cite{KellerI} that one can lift $\phi^*$ to 
the Hochschild cochain complex in such a way that it preserves the $B_\infty$-structure.

Indeed, we define $
\phi^*: C^*(B,B)\to C^*(A,A)$ by its components
\[
\Hom({}_{\phi(a_n)}B_{\phi(a_{n-1})}\otimes \cdots \otimes {}_{\phi(a_1)}B_{\phi(a_{0})},{}_{\phi(a_{n})}B_{\phi(a_0)})\quad\quad\quad\quad\quad\quad\quad
\]
\[\quad\quad\quad\quad\quad\quad\quad\xrightarrow{\ \cong\ } \Hom({}_{a_n}A_{a_{n-1}}\otimes \cdots \otimes {}_{a_1}A_{a_{0}},{}_{a_{n}}A_{a_0}).
\]
That is, $\phi^*$ simply restricts each $k$-linear map in $C^*(B,B)$ to those components only involving objects in the image of $A$. One can check that this is a chain map which induces the homomorphism $\phi^*: \HH(B,B)\to \HH(A,A)$ described above.

It is evident from the formulas in the previous subsection that $
\phi^*: C^*(B,B)\to C^*(A,A)$ respects all of the brace operation. This observation of Keller's is extremely useful. It follows that all operations on  Hochschild cohomology which can be written in terms of the brace structure   
must be preserved by $\phi^*$.

\subsection{Graded Lie Algebras}

A graded Lie algebra is a graded $k$-module $L$  equipped with a bilinear bracket 
$[\- ,\- ]: L\times L\to L$ and with a quadratic \emph{reduced square} operation 
defined on odd elements $(\- )^{[2]}: L^{2i+1}\to L^{4i+2}$,  so $(\alpha 
u)^{[2]}=\alpha^2 u^{[2]}$ for any  $\alpha$ in $k$. These operations should 
satisfy various axioms (compatibility, anti-symmetry and the graded Jacobi 
identity) which can be found in \cite[Remark 10.1.2]{Avramov}. 
 The reduced square is redundant if $2$ is a unit in $k$.

\begin{example}
\label{ex1}Any graded associative algebra $A$ over $k$ gives rise to a graded Lie algebra 
$A^{\sf Lie}$ with the same underlying graded vector space. The bracket is given 
by the graded commutator $[x,y]= xy-(-1)^{|x||y|}yx$ while the reduced square is 
simply the square $u^{[2]}=u^2$. 
 If $A$ has the structure of a graded Hopf algebra, then the space of primitives in
 $A$ is a sub-graded Lie algebra of $A^{\sf Lie}$.
 \end{example}

\begin{example}
\label{ex2}
The degree-shifted Hochschild cochain complex $\susp C^*(A,A)$ is another example. The classical Gerstenhaber bracket is $[x,y]= x\circ y-(-1)^{|x||y|}y\circ x$ 
and the reduced square is $x^{[2]}=x\circ x$. 
These operations descend to cohomology, making  $\susp\HH(A,A)$ into a graded Lie algebra.

In particular, ${\rm HH}^1(A,A)$ is an (ungraded) Lie algebra. This is an important---and often computable---invariant in representation theory.
\end{example}

\subsection{Restricted Graded Lie Algebras}\label{restrictedsubsection} So far none of this depends on the characteristic of $k$. 
The next structure belongs to the positive characteristic world. So, from now on $p$ is a fixed, positive prime, and we assume that $k$ contains a field of characteristic $p$.

A \emph{restricted} graded Lie algebra is a graded Lie algebra $L$ equipped with a $p$-power-linear operation defined on even elements $(\- )^{[p]}:L^{2i}\to L^{2ip}$, so $(\alpha u)^{[p]}=\alpha^p u^{[p]}$ for any $\alpha$ in $k$. This restricted structure must satisfy two standard compatibility conditions with the Lie structure, which can be found in \cite[section 4.2]{ZII} or \cite[section 7]{Tourtchine}.

\begin{example}
If $A$ is a graded associative algebra, then $A^{\sf Lie}$ admits a restricted Lie algebra structure by defining $x^{[p]}=x^p$ on even elements. If, further, $A$ is a graded Hopf algebra, then $p$th powers of primitive, even degree  elements remain primitive because of the formula $(x\otimes 1+1\otimes x)^p = x^p\otimes 1+1\otimes x^p$. Thus the space of primitives in $A$ is also a restricted Lie algebra.
\end{example}

\begin{example}\label{poweropexample}
For us the most important  example is the shifted Hochschild cohomology $\susp {\rm HH}^{*}(A,A)$. On top of the Gerstenhaber bracket there is a $p$-power operation given on $C^*(A,A)$ by
\[
x^{[p]}= x\{x\}\cdots \{x\} = ((x\circ x) \circ \cdots )\circ x
\]
with $x$ appearing $p$ times in either expression, see  \cite[section 3]{Tourtchine}. It is difficult to show that this operation is well-defined on cohomology classes, a proof is given in the appendix.
\end{example}

More classically, this restricted Lie algebra structure is often presented by identifying $C^*(A,A)$ with the space of coderivations on the bar construction $BA$ 
\[
C^*(A,A)\cong {\rm coDer}(BA,BA).
\]
This isomorphism was discovered independently by Quillen \cite{Quillen} and Stasheff \cite{Stasheff}. The Gerstenhaber bracket under this identification becomes the graded commutator of coderivations, and the $p$-power structure is given by $p$-fold composition. See \cite{KellerIII} or \cite{ZII} for details. 

The following fact deserves to be more well-known in algebra. It is one reason that restricted Lie algebras often
occur in homotopical situations, and it gives a convenient way to handle functoriality. The (originally
topological) result is due to Cohen  \cite{Cohen}.

\begin{theorem}\label{E2restrictedLie}
If $C$ is a $B_\infty$-algebra then $\susp{\rm H_*}(C)$ is naturally a restricted graded Lie algebra. A map of
$B_\infty$-algebras induces a map of restricted graded Lie algebras in homology.

In particular, if $\phi: A\to B$ is a fully faithful embedding of dg categories then $\phi^*:\HH(B,B)\to \HH(A,A)$
is (after shifting) a homomorphism of restricted Lie algebras.
\end{theorem}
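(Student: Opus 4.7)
The plan is to deduce the theorem from the classical fact that the primitives of a graded bialgebra over a field of characteristic $p$ form a restricted graded Lie algebra, with operations given by the graded commutator and the $p$-th power. By definition, a $B_\infty$-structure on $C$ is a dg bialgebra structure on the bar construction $BC = \bigoplus_i (\susp C)^{\otimes i}$ (with its standard deconcatenation coproduct), and the primitives of $BC$ for this coproduct are precisely $\susp C \subset BC$.

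First I would verify at the chain level that $\susp C$ is closed under the commutator and $p$-power of $BC$, and that these operations agree with the ones in the paper. The commutator of two primitives in any bialgebra is primitive, so $[x,y]_{BC} \in \susp C$; computing the length-$1$ component of $xy$ using the $B_\infty$-product recovers $x\{y\}-(-1)^{|x||y|}y\{x\}$, the Gerstenhaber bracket. In characteristic $p$ the $p$-th power of an even primitive is still primitive (since $(x\otimes 1 + 1 \otimes x)^p = x^p\otimes 1 + 1\otimes x^p$ when $x$ is even), so $x^p \in \susp C$; an inductive calculation of the length-$1$ projection---using that the $B_\infty$ operations $m_{1,j}$ encode the braces $x\{y_1,\dots,y_j\}$---yields precisely $(((x\{x\})\{x\})\cdots)\{x\}$ from Example \ref{poweropexample}.

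Next I would pass to homology. Assuming (as in all the applications of interest) that $k$ is a field, the K\"unneth map makes ${\rm H_*}(BC)$ into a graded Hopf algebra, so $P({\rm H_*}(BC))$ inherits a restricted graded Lie algebra structure. The chain-level inclusion $\susp C \hookrightarrow BC$ induces a natural map $\susp{\rm H_*}(C) \to P({\rm H_*}(BC))$ through which the graded commutator and $p$-power pull back to the Gerstenhaber bracket and the $p$-power of Example \ref{poweropexample}, giving $\susp{\rm H_*}(C)$ the required restricted graded Lie structure. Functoriality is then automatic: a $B_\infty$-map $f\colon C\to D$ is by definition a map of dg bialgebras $Bf\colon BC\to BD$, which preserves primitives and hence the induced operations. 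The in-particular clause follows from Section \ref{FunctorialitySubsec}, where it was shown that $\phi^*\colon C^*(B,B)\to C^*(A,A)$ preserves all brace operations and therefore assembles into a $B_\infty$-map.

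The main obstacle is the well-definedness of the $p$-power on homology classes. Because $(-)^{[p]}$ is nonlinear, one must verify that $(x+\partial h)^{[p]}-x^{[p]}$ is always a boundary, which amounts to a Jacobson-type identity in the dg setting. The conceptually cleanest route is via the operadic perspective---by McClure--Smith a $B_\infty$-algebra is equivalent to an $E_2$-algebra, and the $p$-power arises naturally as a Dyer--Lashof-type operation on the $E_2$-homology; this is the path taken in the appendix, following the original topological work of May and Cohen. I would defer this technical step to the appendix and otherwise rely on the bar-construction argument above.
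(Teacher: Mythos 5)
Your chain-level identifications are fine: the primitives of $BC$ are exactly $\susp C$, the commutator of primitives recovers the Gerstenhaber bracket, and in characteristic $p$ the $p$-th power of an even primitive is again primitive and its length-one component is the iterated brace of Example \ref{poweropexample}. The gap is in the passage to homology. The restricted structure on $\susp{\rm H}_*(C)$ cannot be ``pulled back'' along the map $\susp{\rm H}_*(C)\to P({\rm H}_*(BC))$, because that map is in general very far from injective (for a unital dg algebra the reduced bar construction computes $\operatorname{Tor}^{C}(k,k)$, which has little to do with ${\rm H}_*(C)$), and a non-injective map does not transport operations to its source. Concretely, well-definedness of the nonlinear operation requires: if $x'=x+\partial h$ with $x,x',h\in C$, then $x'^{[p]}-x^{[p]}=\partial w$ for some $w\in C$. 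Your argument only shows that $x'^{p}-x^{p}$ is a boundary in $BC$, i.e.\ equal to $d_{BC}(W)$ for some $W$ with components in all tensor lengths; the length-one component of $d_{BC}(W)$ involves the brace operations applied to the higher-length components of $W$, so this does not produce a bounding element of $C$ itself. This is precisely the incompleteness in \cite[Lemma 4.2]{ZII} that the paper is at pains to point out: in the coderivation model the correcting element must again be a coderivation (equivalently, of tensor length one), and no direct construction of such a $w$ is known. The bilinear bracket is immune to this problem; the $p$-power is not.

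Once that paragraph is removed, what remains of your proposal is exactly the paper's proof: the Gerstenhaber structure on the homology of a $B_\infty$-algebra is classical \cite{GV}; the $p$-power is given by the chain-level formula of Example \ref{poweropexample}; its well-definedness is established operadically in the appendix, via May's Theorem \ref{Maythm} combined with the Cohen--Turchin computation of Theorem \ref{Cohenthm} ($\Sigma$-splitness of the brace operad is what substitutes for the missing explicit $w$); and the ``in particular'' clause follows from Keller's observation \cite{KellerI} that $\phi^*\colon C^*(B,B)\to C^*(A,A)$ is a map of $B_\infty$-algebras. So your deferral to the appendix is not an optional refinement of an otherwise complete bar-construction argument---it is the entire content of the nontrivial step, and the phrase ``otherwise rely on the bar-construction argument above'' overstates what that argument delivers. (A minor additional point: your K\"unneth step needs $k$ to be a field, whereas the paper only assumes $k$ contains a field of characteristic $p$.)
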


In other words, the homology of any $B_\infty$-algebra is a ``restricted Gerstenhaber algebra''. 

\begin{proof}
It is well-known that the cohomology of a $B_\infty$-algebra is a Gerstenhaber algebra \cite{GV}, so we only need to explain the $p$-power operation. This is given by the same formula as in example \ref{poweropexample}. The proof that this operation is well-defined turns out to be subtle, so it is dealt with in the appendix.

The statement about Hochschild cohomology follows from Keller's observation \cite{KellerI} that the restriction map $C^*(B,B)\to C^*(A,A)$ is one of $B_\infty$-algebras.
\end{proof}

\begin{remark}
\label{CDLoperation}
The $p$-power map is (when $p$ is odd) just one of the two Cohen-Dyer-Lashof operations which exists on
the homology of any $B_\infty$-algebra, see \cite{Cohen,Tourtchine}. The other operation is defined on
odd cycles by the formula 
 $\zeta(x)= \sum_{i+j=p-1}\frac{(-1)^i}{i}x^{[i]}\smile x^{[j]}$ as in \cite[theorem 3.1]{Tourtchine}. It
 doesn't seem that this structure has been exploited by algebraists. For example, using the methods of
 this paper the induced $\zeta$ operation on Hochschild cohomology could conceivably be used to
 distinguish algebras up to stable equivalence of Morita type.
\end{remark}

\begin{corollary}\label{restrictedderivedinv} 
If $A$ and $B$ are two dg algebras which are derived equivalent, then there is an isomorphism $\susp {\rm HH}^{>0}(A,A)\cong\susp {\rm HH}^{>0}(B,B)$ of restricted graded Lie algebras.
\end{corollary}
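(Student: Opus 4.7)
The plan is to reduce the statement to Keller's construction connecting derived equivalent dg algebras by a zig-zag of fully faithful embeddings of dg categories, and then invoke Theorem \ref{E2restrictedLie}. Concretely, if $A$ and $B$ are derived equivalent via a two-sided tilting dg bimodule $T$, with $B \simeq \RHom_A(T,T)$, then following \cite{KellerI} one forms a (cofibrant) dg category $\mathcal{E}$ as the full subcategory of a suitable dg enhancement of $\dcat(A)$ on the objects $A$ and a cofibrant replacement of $T$. This yields two fully faithful dg embeddings $i_A : A \hookrightarrow \mathcal{E}$ and $i_B : B \hookrightarrow \mathcal{E}$ (the second after identifying ${\rm End}_{\mathcal{E}}(T) \simeq B$).

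By Theorem \ref{E2restrictedLie}, each of $i_A$ and $i_B$ induces a homomorphism of restricted graded Lie algebras on shifted Hochschild cohomology:
\[
i_A^*\colon \susp\HH(\mathcal{E},\mathcal{E}) \to \susp\HH(A,A),\qquad i_B^*\colon \susp\HH(\mathcal{E},\mathcal{E}) \to \susp\HH(B,B).
\]
The content of Keller's argument in \cite{KellerI} is that both $i_A^*$ and $i_B^*$ are isomorphisms: the intuition is that $A$ and $T$ each split-generate the same triangulated subcategory of $\dcat(A)$, so enlarging from the one-object category $A$ (or $B$) to $\mathcal{E}$ does not change Hochschild cohomology. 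Composing $i_B^*$ with the inverse of $i_A^*$ then produces the required isomorphism $\susp\HH(A,A) \cong \susp\HH(B,B)$ of restricted graded Lie algebras, which in particular restricts to positive degrees as stated.

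The only genuine obstacle lies inside Keller's zig-zag itself: the model-theoretic construction of $\mathcal{E}$ and the verification that the associated restriction maps on $\HH$ are quasi-isomorphisms. Both are black-boxed from \cite{KellerI}. The real conceptual content sits in Theorem \ref{E2restrictedLie}, which promotes the $p$-power operation into a $B_\infty$-algebra invariant and hence makes it functorial under any morphism of $B_\infty$-algebras such as $\phi^*$; this is precisely what allows Keller's zig-zag to automatically transport the restricted Lie structure from $A$ to $B$ without any further computation.
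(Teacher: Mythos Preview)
Your proposal is correct and follows essentially the same approach as the paper: combine Keller's result from \cite{KellerI} (that derived equivalent dg algebras are connected by a zig-zag of $B_\infty$-quasi-isomorphisms through an intermediate dg category) with Theorem \ref{E2restrictedLie}. You spell out the construction of the intermediate dg category $\mathcal{E}$ in slightly more detail than the paper does, but the logical structure and the two key inputs are identical.
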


This was originally stated by Zimmerman \cite[proposition 4.4]{ZII}. 
The proof in \cite{ZII} seems to be incomplete, because some of the maps used (between complexes of coderivations) are not well-defined at the chain level.

Using Theorem \ref{E2restrictedLie}, Corollary \ref{restrictedderivedinv} now follows from a result of Keller \cite{KellerI}: if $A$ and $B$ are derived equivalent then $C^*(A,A)$ and $C^*(B,B)$ are connected by a zig-zag of quasi-isomorphisms of $B_\infty$-algebras. The main result of this paper is the generalisation to stable equivalences of Morita type.

As another application, the next corollary follows in the same way from Theorem \ref{E2restrictedLie} and \cite[theorem 3.5]{KellerI}.

\begin{corollary}\label{KDcor} 
If $A$ and $B$ are a Koszul dual pair of Koszul algebras, then 
$\susp {\rm HH}^{>0}(A,A)\cong\susp {\rm HH}^{>0}(B,B)$ as restricted graded Lie algebras.
\end{corollary}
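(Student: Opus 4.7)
The strategy is exactly parallel to the proof of Corollary \ref{restrictedderivedinv}, only replacing the derived-equivalence input with a Koszul-duality input. The plan is to feed Keller's comparison theorem into Theorem \ref{E2restrictedLie}, which does all the heavy lifting once we have a $B_\infty$-level statement.

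First I would invoke \cite[Theorem 3.5]{KellerI}, which asserts that if $A$ and $B$ are Koszul dual (in the sense made precise there), then the Hochschild cochain complexes $C^*(A,A)$ and $C^*(B,B)$ are connected by a zig-zag of quasi-isomorphisms in the category of $B_\infty$-algebras. This is the entire analytic content; there is nothing further to prove about Koszul duality itself.

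Next I would apply Theorem \ref{E2restrictedLie} to each arrow in the zig-zag. Since a morphism of $B_\infty$-algebras induces, after shifting, a homomorphism of restricted graded Lie algebras on homology, and since each arrow is a quasi-isomorphism, each induced map on $\susp {\rm HH}^{*}$ is an isomorphism of restricted graded Lie algebras. Composing the resulting isomorphisms along the zig-zag gives an isomorphism $\susp {\rm HH}^{*}(A,A)\cong \susp {\rm HH}^{*}(B,B)$ of restricted graded Lie algebras. Restricting to strictly positive degrees is automatic, as both the bracket and the $p$-power preserve positivity of the cohomological degree (the reduced square raises degree from $2i+1$ to $4i+2$, the $p$-power raises $2i$ to $2ip$, and the bracket is additive).

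There is essentially no obstacle beyond citing the two results: Theorem \ref{E2restrictedLie} has already absorbed all the subtlety about well-definedness of the $p$-power operation (handled in the appendix via the operadic argument of Cohen and Turchin), and Keller's theorem packages the Koszul-duality comparison at the $B_\infty$-level. The only thing worth double-checking is that Keller's zig-zag consists of maps of $B_\infty$-algebras and not merely of $A_\infty$- or Gerstenhaber-algebras, since without the full $B_\infty$-structure one cannot control the $p$-power; this is indeed what is provided in \cite[Theorem 3.5]{KellerI}.
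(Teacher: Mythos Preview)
Your proposal is correct and follows exactly the paper's approach: the paper states that the corollary ``follows in the same way from Theorem \ref{E2restrictedLie} and \cite[theorem 3.5]{KellerI},'' which is precisely your argument of feeding Keller's $B_\infty$-level zig-zag for Koszul dual algebras into Theorem \ref{E2restrictedLie}.
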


The invariance of the $B_\infty$-structure under Koszul duality has been extended to not-necessarily Koszul algebras in \cite{KellerIV} (but here one uses Koszul–Moore duality, producing a dg coalgebra as the Koszul dual object). In this context one also has a $p$-power operation, and the analogue of Corollary \ref{KDcor} holds as well.

\subsection{Singularity Categories and Stable Hochschild Cohomology}\label{stableHHSection}

Let $\dbcat(A)$ be the homotopy category of bounded below complexes of finitely generated projectives that have bounded homology. The singularity category $\sing(A)$ of an algebra (or category) $A$ 
is defined as the Verdier quotient of  $\dbcat(A)$ by its subcategory ${\rm perf}(A)$ of perfect complexes, that is, bounded complexes of projective modules (cf.~\cite{Buc} or \cite{Orl}). 

Following Buchweitz \cite{Buc}, the \emph{stable} (or \emph{singular}, or \emph{Tate}) Hochschild cohomology of $A$ is the graded endomorphism algebra
\[
\sHH(A,A)=\sExt_{A\- A}(A,A)={\rm End}_{\sing(A^\ev)}(A).
\]
For symmetric algebras the positive part of $\sHH(A,A)$
coincides with $\HH(A,A)$ (Corollary 6.4.1 in \cite{Buc}).

This definition only works if $A$ is an object of $\dbcat(A^\ev)$. This is automatic if $A$ is finite over $k$, but in some contexts this is a strong condition on $A$. To avoid worrying about this we can use a large version of the singularity category  from  \cite{Orl}, as Keller has done in \cite{KellerII}.

Let $\cdbcat(A)$ denote the full subcategory of $\dcat(A)$ consisting of dg modules with bounded homology, 
and let $\flat(A)$ denote the thick subcategory of $\dcat(A)$ consisting of bounded complexes of flat modules. By definition $\csing(A)$ is the Verdier quotient $\cdbcat(A)/\flat(A)$. With this large version of the singularity category we may make the general definition
\[
\sHH(A,A)=\sExt_{A\- A}(A,A)={\rm End}_{\csing(A^\ev)}(A).
\]
Through the projection functor $\cdbcat(A\- A)\to\csing(A\-A)$ we have a natural stabilization homomorphism $\HH(A,A)\to\sHH(A,A)$. We must justify that this is consistent with our earlier definition:
\begin{proposition}[{\cite[proposition 1.13]{Orl}} or {\cite[lemma 2.2]{KellerII}}]
The natural functor 
$
\sing(A)\longrightarrow\csing(A)
$ 
is fully-faithful.
\end{proposition}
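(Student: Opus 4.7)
The plan is to prove fullness and faithfulness of the functor $\sing(A)\to\csing(A)$ separately, each reducing to a single approximation lemma.

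The key technical lemma asserts that every morphism $\alpha\colon X\to F$ in $\dcat(A)$ with $X\in\dbcat(A)$ and $F\in\flat(A)$ factors in $\dcat(A)$ as $X\to P\to F$ for some $P\in{\rm perf}(A)\cap\dbcat(A)$. To prove it, I would represent $\alpha$ by an honest chain map (possible because bounded complexes of finitely generated projectives are K-projective). Then, invoking Lazard's theorem that every flat module is a filtered colimit of finitely generated projectives, together with the boundedness of both $X$ and $F$ and the fact that each $X^i$ is finitely generated, one inductively produces a bounded subcomplex $P\subseteq F$ of finitely generated projectives through which the chain map factors. I expect this construction---choosing the subcomplex degree by degree in a manner compatible with the differential of $F$, rather than merely degreewise---to be the main technical obstacle, though it is finite thanks to the boundedness of $F$.

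Given the key lemma, faithfulness is immediate. If $f\colon X\to Y$ in $\dbcat(A)$ vanishes in $\csing(A)$, then by the standard characterization of zero morphisms in a Verdier quotient, $f$ factors through some $F\in\flat(A)$. Applying the lemma to the first leg $X\to F$ refines this to a factorization $X\to P\to F\to Y$ with $P\in{\rm perf}(A)$, exhibiting $f$ as zero in $\sing(A)$ as well.

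For fullness, I would represent a class $\alpha\in\Hom_{\csing(A)}(X,Y)$ by a roof $X\xleftarrow{s} Z\xrightarrow{g} Y$ with ${\rm cone}(s)=C\in\flat(A)$, and embed it in the triangle $Z\xrightarrow{s} X\xrightarrow{u} C\to\susp Z$. The key lemma factors $u$ as $X\xrightarrow{v} P\xrightarrow{w} C$ with $P\in{\rm perf}(A)$. Completing $v$ to a triangle $W\to X\xrightarrow{v} P\to\susp W$ yields $W\in\dbcat(A)$ with the cone of $W\to X$ in ${\rm perf}(A)$. Because $v\circ (W\to X)=0$ as two successive arrows of a triangle, the composite $W\to X\xrightarrow{u} C = w\circ v\circ(W\to X)$ vanishes, so $W\to X$ lifts through $s$ to some $h\colon W\to Z$. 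The roof $X\xleftarrow{} W\xrightarrow{g\circ h} Y$ then lives in $\sing(A)$, and the comparison map $h$ witnesses its equivalence to the original roof in $\csing(A)$, providing the required preimage of $\alpha$.
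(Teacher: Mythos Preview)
Your approach is correct and matches the paper's: the paper gives no proof of its own but cites Orlov and Keller, remarking that their argument carries over once one uses Lazard's theorem---precisely your key lemma followed by the standard Verdier-quotient manipulations for faithfulness and fullness. One small slip: in the paper's conventions an object $X\in\dbcat(A)$ need only be bounded \emph{below} (with bounded homology), not bounded, but since $F$ is bounded the chain map $X\to F$ involves only finitely many degrees of $X$, so your inductive construction of $P$ is unaffected.
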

The statement here is slightly stronger than that of \cite{Orl} or \cite{KellerII}, but the same proof works (keeping in mind that flat modules are the filtered colimit of their projective submodules).

\subsection{More On Functoriality}\label{MoreFunctorialitySubsec} If  $\phi:A\to B$ is a fully faithful embedding, then $R_\phi:\dcat(B)\to \dcat(A)$ always preserves the bounded derived categories. If further each of the right $A$-modules  $R_\phi ({}_{b}B)$ have finite flat dimension, then this functor descends to a stable restriction functor $\underline{R}_\phi:\csing(B)\to \csing(A)$.

On the other hand, $L_\phi:\dcat(A)\to \dcat(B)$ restricts always to perfect objects, and to $\flat(A)$. If we assume that each of the left $A$-modules $R_\phi( B_b)$ have finite flat dimension, then $L_\phi$ will restrict to bounded derived categories, and therefore induce a stable extension of scalars $\underline{L}_\phi:\csing(A)\to\csing(B)$.

When $\phi$ has finite flat dimension on both sides (i.e. all of the restrictions ${}_bB$ and $B_b$ have finite flat dimension), both functors $\underline{L}_\phi$ and $\underline{R}_\phi$ exist and are left and right adjoint to each other respectively
\[
\begin{tikzcd}
 \underline{L}_\phi:\csing(A)\ar[r, shift left = 1mm ]& \ar[l, shift left = 1mm ]\csing(B):\underline{R}_\phi
\end{tikzcd}
\]
If we had assumed that the restrictions ${}_bB$ and $B_b$ were all \emph{perfect}, we would have obtained an adjunction between the small singularity categories $\sing(A)$ and $\sing(B)$. Even in good situations when the small singularity categories are available, the existence of $\underline{L}_\phi$ and $\underline{R}_\phi$ in this greater generality makes $\csing(A)$ useful.

\section{Stable equivalence of Morita type}\label{SEMTsection}

The setup in this section is that we have two flat 
$k$-algebras (or categories) $A$ and $B$, and an $A\- B$ bimodule $M$ which has finite flat dimension on both sides (i.e. as a left $A$ and a right $B$-module separately). 
We'll always use $a$ for an arbitrary object of $A$, and $b$ for an arbitrary object of $B$.

We borrow a construction from Keller \cite{KellerI}. 
Let $T$ the category whose objects are the disjoint union of those from $A$ and $B$, and whose morphisms are given by ${}_aT_a={}_aA_a$, ${}_bT_b={}_bB_b$, ${}_aT_b={}_aM_b$ and ${}_bT_a=0$. 
Composition in $T$ is given by composition in $A$ and $B$ and by the action of $A$ and $B$ on $M$. 
Diagramatically, $T$ looks like
\[
\begin{tikzcd}[column sep = 15mm]
    a \ar[loop, looseness=5,"A"'] & \ar[l,"M"'] b \ar[loop , looseness=4,"B"'] 
\end{tikzcd}
\]

and $T$ comes with two fully faithful embeddings $A\to T$ and $B\to T$.

We also have two fully faithful embeddings $A^\ev\to T^\ev$ and $B^\ev\to T^\ev$. 
The assumption that $M$ has finite flat dimension on both sides, and the assumption that $A$ and $B$ are flat over $k$, together imply that $T^\ev$ has finite flat dimension when restricted to either $A^\ev$ or $B^\ev$ (because the tensor product of two flat modules is flat). As explained in section \ref{MoreFunctorialitySubsec}, this implies that we have stable restriction functors  
\[
\underline{R}_A:\csing(A\- A)\to \csing(T\- T)\ \text{ and }\ \underline{R}_B:\csing(B\- B)\to \csing(T\- T).
\]

Since $R_A(T)=A$ and $R_B(T)=B$ we obtain a commutative diagram of homomorphisms
\begin{equation}\label{StabilisationDiagram}
\begin{tikzcd}
    \Ext_{A\- A}(A,A) \ar[d] & \Ext_{T\- T}(T,T) \ar[d] \ar[l,"R_A"'] \ar[r,"R_B"] & \Ext_{B\- B}(B,B) \ar[d]\\
    \sExt_{A\- A}(A,A) & \sExt_{T\- T}(T,T)  \ar[l,"\underline{R}_A"'] \ar[r,"\underline{R}_B"]& \sExt_{B\- B}(B,B).
\end{tikzcd}
\end{equation}

The key to the proof of Theorem \ref{stableinvariancetheorem} is to use this diagram with the following result. This is the technical core of the paper.

\begin{theorem}\label{keytheorem}
If $\-\otimes_A^{\sf L}M:\csing(A\- A)\to \csing(A\- B)$ and $M\otimes_B^{\sf L}\-:\csing(B\- B)\to \csing(A\- B)$ are both fully faithful then 
\[
\underline{R}_A:\sExt_{T\- T}(T,T)\to \sExt_{A\- A}(A,A)\quad\text{and}\quad
\underline{R}_B: \sExt_{T\- T}(T,T)\to\sExt_{B\- B}(B,B)
\]
are both isomorphims.
\end{theorem}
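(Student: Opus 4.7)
The plan is to build a Mayer--Vietoris distinguished triangle in $\csing(T^\ev)$ from natural sub-bimodules of $T$, and then to apply $\Hom_{\csing(T^\ev)}(-,T)$ together with the adjunctions from Section \ref{MoreFunctorialitySubsec} to obtain a long exact sequence relating $\sExt_{T\-T}(T,T)$ to the Ext groups $\sExt_{A\-A}(A,A)$, $\sExt_{B\-B}(B,B)$, and $\sExt_{A\-B}(M,M)$. Using the upper-triangular picture of $T$ with idempotents $e_A,e_B$, I single out the sub-bimodules $e_AT$ (the first row) and $Te_B$ (the second column); their intersection is $e_AT\cap Te_B\cong M$ and their sum is all of $T$, yielding the short exact sequence of $T$-$T$ bimodules
\[
0\to M\to e_AT\oplus Te_B\to T\to 0
\]
and hence a distinguished triangle in $\csing(T^\ev)$.

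Under the finite flat-dimension hypotheses on $M$, the stable restriction functors $\underline{R}_A$, $\underline{R}_B$, and $\underline{R}_{A,B}\colon\csing(T^\ev)\to\csing(A\-B)$ each admit a left adjoint $\underline{L}_A$, $\underline{L}_B$, $\underline{L}_{A,B}$, and one checks the identifications $\underline{L}_A(A)\simeq e_AT$, $\underline{L}_B(B)\simeq Te_B$, $\underline{L}_{A,B}(M)\simeq M$ in $\csing(T^\ev)$. Applying $\Hom_{\csing(T^\ev)}(-,T)$ to the triangle and combining these identifications with $\underline{R}_A(T)=A$, $\underline{R}_B(T)=B$, and $\underline{R}_{A,B}(T)=M$, the adjunction isomorphisms convert everything into a long exact sequence
\[
\cdots\to\sExt^{n-1}_{A\-B}(M,M)\to\sExt^n_{T\-T}(T,T)\xrightarrow{(\underline{R}_A,\underline{R}_B)}\sExt^n_{A\-A}(A,A)\oplus\sExt^n_{B\-B}(B,B)\xrightarrow{\phi}\sExt^n_{A\-B}(M,M)\to\cdots
\]
in which a chase of the identifications gives $\phi(g,h)=g\otimes_A^{\sf L}M-M\otimes_B^{\sf L}h$.

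The hypothesis that $-\otimes_A^{\sf L}M$ and $M\otimes_B^{\sf L}-$ are fully faithful says precisely that each of the two components of $\phi$ is individually an isomorphism. Hence $\phi$ is surjective, the connecting maps vanish, and the kernel of $\phi$ is the graph of the induced isomorphism $\sExt^n_{A\-A}(A,A)\tosim\sExt^n_{B\-B}(B,B)$; projection onto either factor is bijective. The long exact sequence then identifies $\sExt^n_{T\-T}(T,T)$ via $(\underline{R}_A,\underline{R}_B)$ with this graph, so both $\underline{R}_A$ and $\underline{R}_B$ are individually isomorphisms.

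The main obstacle is the bookkeeping needed to verify the three identifications $\underline{L}_A(A)\simeq e_AT$, $\underline{L}_B(B)\simeq Te_B$, $\underline{L}_{A,B}(M)\simeq M$ in $\csing(T^\ev)$, and to check that the restriction maps on $\Hom$ correspond under the three adjunctions to the tensor-with-$M$ maps appearing in $\phi$. These are naturality statements that rely on the finite flat-dimension of $M$ (to make the derived and underived tensor products agree with the sub-bimodule inclusions) together with the definitions of the adjoint functors, but they need to be pinned down carefully to make the long exact sequence argument rigorous.
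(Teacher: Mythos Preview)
Your proposal is correct and follows essentially the same route as the paper. The paper builds the same Mayer--Vietoris triangle $LM\to LA\oplus LB\to T\to$ in $\csing(T\text{-}T)$, applies $\sExt_{T\text{-}T}(-,T)$, and uses the adjunctions to reduce to showing that the two components of the map into $\sExt_{A\text{-}B}(M,M)$ are $-\otimes_A^{\sf L}M$ and $M\otimes_B^{\sf L}-$; your idempotent description $e_AT$, $Te_B$, $e_ATe_B\cong M$ is exactly the paper's explicit computation of $LA$, $LB$, $LM$ in different notation, and the ``bookkeeping'' you flag (identifying $\underline{L}_A(A)\simeq e_AT$ as a \emph{derived} object, and matching the adjunction isomorphisms with tensoring by $M$) is precisely what the paper carries out via the auxiliary resolution $\widetilde{M}$ and the commutative square at the end of its proof.
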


The hypotheses of this theorem are what we mean by stable equivalence of Morita type. Proposition \ref{SEMTprop} below connects this with the usual notion.

\begin{proof}
Associated to the embeddings of $A^\ev$, $B^\ev$ and $A^\op\otimes B$ into $T^\ev$ we have the left adjoints (from sections \ref{FunctorialitySubsec} and \ref{MoreFunctorialitySubsec}) which embed $\cdbcat(A\- A)$, $\cdbcat(B\- B)$ and $\cdbcat(A\- B)$ into $\cdbcat(T\- T)$, respectively. To simplify notation we denote all of these functors by $L$ (the domain should always be clear from context).

We start by showing that in $\cdbcat(T\- T)$ there is a natural triangle
\begin{equation}\label{basicTriangle}
   LM\to LA\oplus LB \to T\to \ .
\end{equation}

The adjunction $\Ext_{A\- A}(A,A)=\Ext_{A\- A}(A,RT)\cong \Ext_{T\- T}(LA,T)$ gives the map $LA\to T$ in this triangle. Similarly for $LB\to T$.

 We need to resolve $T$ in order to compute these left adjoints. By truncating a flat  $A^\op\otimes B$ resolution of $M$ somewhere above the flat dimensions of $M$ as a left or right $A$ or $B$-module, we obtain a quasi-isomorphism $\widetilde{M}\to M$ from a complex of bimodules which are all flat as left $A$ and right $B$-modules seperately (this uses flatness of $A$ and $B$ over $k$). 
 
There is then a natural $T$ bimodule (in fact, dg category) $\widetilde{T}$ given by
\[
\begin{tikzcd}[column sep = 15mm]
    a \ar[loop, looseness=5,"A"'] & \ar[l,"\widetilde{M}"'] b \ar[loop , looseness=4,"B"'] 
\end{tikzcd}
\]
and $\widetilde{T}$ admits a quasi-isomorphism to $T$. One can think of $\widetilde{T}$ as a flat left $A$-module resolution of $T$ (with a compatible right $T$ action). In turn, $\widetilde{T}\otimes\widetilde{T}$ is a flat left $A^\ev$-module resolution of $T\otimes T$  (with a compatible right $T^\ev$ action). This means $LA=A\otimes_{A^\ev}^{\sf L}(T\otimes T)\simeq A\otimes_{A^\ev}(\widetilde{T}\otimes\widetilde{T})$. This tensor product looks like
\[
\begin{tikzcd}[column sep = 15mm]
    a \ar[loop, looseness=5,"A"'] & \ar[l,"\widetilde{M}"'] b \ar[loop , looseness=4,"0"'] 
\end{tikzcd} \simeq
\begin{tikzcd}[column sep = 15mm]
    a \ar[loop, looseness=5,"A"'] & \ar[l,"{M}"'] b \ar[loop , looseness=4,"0"'] 
\end{tikzcd}
\]
i.e. ${}_a(LA)_a \simeq A$, ${}_a(LA)_b \simeq \widetilde{M}\simeq M$ and ${}_b(LA)_b=0$. The obvious inclusion into $T$ is the map in the triangle (\ref{basicTriangle}).

Similarly $LB$ is given by
\[
\begin{tikzcd}[column sep = 15mm]
    a \ar[loop, looseness=5,"0"'] & \ar[l,"{M}"'] b \ar[loop , looseness=4,"B"'] 
\end{tikzcd}
\]
and $LM$ is given by
\[
\!\!\!\!\begin{tikzcd}[column sep = 15mm]
    a \ar[loop, looseness=5,"0"'] & \ar[l,"{M}"'] b \ar[loop , looseness=4,"0"'] 
\end{tikzcd}\!\!\!\!\!\!\!.
\]
The map $LM\to LA\oplus LB$ is given by $M\to M\oplus M,\ m\mapsto (m,-m)$ in the ${}_a(LM)_b\to {}_a(LA\oplus LB)_b$ component, and (necesserily) zero at all other objects. This in fact produces a short exact sequence $0\to LM\to LA\oplus LB \to T\to 0$ of $T\-T$ bimodules, giving the desired triangle.

Since the components of $T$ are perfect as left and right modules over $A$ and $B$ respectively, all the functors named $L$ descend to functors $\underline{L}$ on singularity categories (this was explained in section \ref{stableHHSection}). Therefore, when we stabilize the triangle (\ref{basicTriangle}) by applying the quotient $\cdbcat(T\- T)\to \csing(T\- T)$, we get
\[
\underline{L}M\to \underline{L}A\oplus \underline{L}B \to T\to \ .
\]
Applying $\sExt_{T\- T}(-,T)$ produces the (Meyer-Vietoris) exact sequence
\[
\cdots\to \sExt_{T\- T}(T,T) \to \sExt_{T\- T}(\underline{L}A,T) \oplus
    \sExt_{T\- T}(\underline{L}B,T) \to \sExt_{T\- T}(\underline{L}M,T) \to \cdots
\]
By the adjunction between $\underline{L}$ and $\underline{R}$ (where $\underline{R}$ is the appropriate stabilized restriction map), and the fact that $\underline{R}\underline{L}$ is  naturally isomorphic to the identity, this long exact sequence is isomorphic to
\[
\cdots\to \sExt_{T\- T}(T,T) \to \sExt_{A\- A}(A,A) \oplus
    \sExt_{B\- B}(B,B) \to \sExt_{A\- B}(M,M) \to \cdots\ .
\]

In this sequence the two components of $\sExt_{T\- T}(T,T) \to \sExt_{A\- A}(A,A) \oplus
    \sExt_{B\- B}(B,B)$ are precisely the stabilized restrictions  $\underline{R}_A$ and $\underline{R}_B$ in the statement of the theorem.
    
To show that these are isomorphisms it is equivalent to show that the two components of $ \sExt_{A\- A}(A,A) \oplus
    \sExt_{B\- B}(B,B) \to \sExt_{A\- B}(M,M) $ are isomorphisms.  It is sufficient to show that they are each given by tensoring against $M$ (on the appropriate side), and therefore are isomorphisms by hypothesis. Before stabilizing, this means we must check that the following diagram commutes
\[
\begin{tikzcd}[column sep=15mm, row sep=5mm]
    \Ext_{T\-T}(LA,T) \ar[r]\ar[d,"\cong"] & \Ext_{T\-T}(LM,T) \ar[d,"\cong"]\\
    \Ext_{A\-A}(A,A) \ar[r,"-\otimes^{\sf L}_A M"] & \Ext_{A\-B}(M,M)
\end{tikzcd}
\]
One checks this by starting in the lower-left corner. The point is that the adjunction $\Ext_{A\-A}(A,A) \cong \Ext_{T\-T}(LA,T)$ by definition extends a map $\xi:A\to A$ to $LA\to T$  along the tensor product ${}_a(LA)_b=A\otimes^{\sf L}_{A^\ev}(A\otimes M) \simeq M \to M = {}_a T_b$. Since ${}_a(LA)_b\simeq {}_a(LM)_b$ the top arrow makes no difference after restricting to the lower-right corner. (One should really check this diagram commutes using projective bimodule resolutions of $A$ and $M$, but again this makes little difference.) After stablising the diagram the lower arrow becomes an isomorphism by hypothesis.

The verification for $B$ is very similar. This finishes the proof.
\end{proof}

Let us point out that the hypotheses of Theorem \ref{keytheorem} already imply 
that $M$ is one half of a stable equivalence of Morita type. This proposition is presumably well-known (at some level of generality).

\begin{proposition}\label{SEMTprop}
Let $M$ be an $A\- B$ dg bimodule 

which is perfect on both sides. If $\-\otimes_A^{\sf L}M:\sing(A\- A)\to \sing(A\- B)$ and $M\otimes_B^{\sf L}\-:\sing(B\- B)\to \sing(A\- B)$ are both fully faithful then there is a $B\- A$ dg bimodule $N$, perfect on both sides, such that $M\otimes_B^{\sf L} N\simeq A$ and $N\otimes_A^{\sf L} M \simeq B$ in $\sing(A\- A)$ and $\sing(B\- B)$ respectively.
\end{proposition}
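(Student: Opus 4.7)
A natural candidate is the right $B$-dual $N=\RHom_B(M,B)$, viewed as a $B\-A$ dg bimodule. Since $M$ is perfect as a right $B$-module, $N$ is perfect as a left $B$-module; the perfectness of $N$ as a right $A$-module will come out of the stable identification $N\simeq \RHom_A(M,A)$ established below.

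The first ingredient is the derived adjunction
\[
-\otimes_A^{\sf L} M\ \dashv\ -\otimes_B^{\sf L} N
\]
between $\dcat(A\-A)$ and $\dcat(A\-B)$, which follows from tensor-hom adjunction together with the identification $\RHom_B(M,-)\simeq -\otimes_B^{\sf L} N$ (valid because $M$ is perfect as a right $B$-module). Perfectness on the appropriate sides ensures this adjunction descends to singularity categories. Since $-\otimes_A^{\sf L} M$ is fully faithful on $\sing(A\-A)$ by hypothesis, the unit of the adjunction is automatically an isomorphism; in particular the unit at $A$ gives an isomorphism $A\simeq M\otimes_B^{\sf L} N$ in $\sing(A\-A)$.

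Running the parallel argument with $N'=\RHom_A(M,A)$ and the adjunction $M\otimes_B^{\sf L}-\,\dashv\, N'\otimes_A^{\sf L}-$ between $\dcat(B\-B)$ and $\dcat(A\-B)$, the second fully faithfulness hypothesis yields $B\simeq N'\otimes_A^{\sf L} M$ in $\sing(B\-B)$. Combining these via associativity of the derived tensor product,
\[
N\simeq B\otimes_B^{\sf L} N\simeq (N'\otimes_A^{\sf L} M)\otimes_B^{\sf L} N\simeq N'\otimes_A^{\sf L} (M\otimes_B^{\sf L} N)\simeq N'\otimes_A^{\sf L} A\simeq N',
\]
so $N\simeq N'$ in $\sing(B\-A)$. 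This both confirms the perfectness claim above (since $N'=\RHom_A(M,A)$ is perfect as a right $A$-module by the left $A$-perfectness of $M$) and immediately gives $N\otimes_A^{\sf L} M\simeq N'\otimes_A^{\sf L} M\simeq B$ in $\sing(B\-B)$.

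The main obstacle I expect is the technical care required to descend the two adjunctions cleanly to singularity categories, and to verify perfectness of $N$ on both sides --- this is where the two-sided perfectness of $M$ is essential, via the stable identification $N\simeq N'$. Everything else is a formal consequence of the fully faithfulness hypotheses, once the adjunction machinery is in place.
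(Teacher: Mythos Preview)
Your proposal is essentially the same as the paper's proof. Both introduce the two duals $N=\RHom_B(M,B)$ and $N'=\RHom_{A^\op}(M,A)$ (the paper writes these as $M^\vee$ and ${}^\vee M$), extract one stable isomorphism from each fully faithfulness hypothesis via adjunction, and then identify $N\simeq N'$ by the usual uniqueness-of-inverses argument. The only cosmetic difference is which adjunction is invoked: you use $(-\otimes_A^{\sf L} M)\dashv(-\otimes_B^{\sf L} N)$, exhibiting $-\otimes_A^{\sf L} M$ as a \emph{left} adjoint (using that $M_B$ is perfect), whereas the paper uses $(-\otimes_B^{\sf L} {}^\vee M)\dashv(-\otimes_A^{\sf L} M)$, exhibiting $-\otimes_A^{\sf L} M$ as a \emph{right} adjoint via the identification $-\otimes_A^{\sf L} M\simeq \RHom_A({}^\vee M,-)$ (using that ${}_AM$ is perfect). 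Both routes yield the same conclusion.

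One small caution on your last paragraph: the stable identification $N\simeq N'$ in $\sing(B\-A)$ does not by itself give perfectness of $N$ as a right $A$-module, since perfectness is a derived notion and is not invariant under isomorphism in the singularity category. The paper's proof is equally brief on this point; in practice one simply takes $N$ to be either dual and observes that both satisfy the required stable isomorphisms once $N\simeq N'$ is known.
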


If $A$ and $B$ are Gorenstein algebras then $M$ and $N$ could be replaced with  $B\- A$ bimodules (rather than a complexes of such), by taking syzygies. More classically, if $A$ and $B$ are self-injective then this is easily seen to coincide with the usual definition.

\begin{proof}
Note that ${}^\vee M=\RHom_{A^\op}(M,A)$ is naturally a dg $B\-A$-bimodule. Since ${}_AM$ is perfect there is a natural quasi-isomorphism $\-\otimes_A^{\sf L} M\to \RHom_{A}({}^\vee M,\-)$. It follows that there are natural isomorphisms
\[
\sHom_{A\-A}(A,\-)\xrightarrow{\-\otimes_A^{\sf L} M}\sHom_{A\-B}(M,\-\otimes_A^{\sf L} M)\xrightarrow{\cong}\sHom_{A\-B}(M,\RHom_{A}({}^\vee M,\-))
\]
\[
\hspace{75mm}\cong \sHom_{A\-A}(M\otimes_A^{\sf L} {}^\vee M,\-).
\]
So by the Yoneda lemma $M\otimes_B^{\sf L} {}^\vee M\simeq A$ in $\sing(A\- A)$. Similarly if we set $M^\vee=\RHom_B(M,B)$ then $M^\vee\otimes_A^{\sf L} M\simeq B$ in $\sing(B\- B)$. It follows that ${}^\vee M\simeq M^\vee$, by the usual argument for uniqueness of inverses.
\end{proof}

\section{Stable Invariance of the restricted Structure}
 
 A ring is called \emph{Iwanaga Gorenstein} if it is two-sided noetherian, and if it has finite injective dimension over itself as both a left and right module (Buchweitz calls these rings strongly Gorenstein \cite{Buc}). By a result of Zaks, this implies that the two injective dimensions $\idim {{}_AA}$ and $\idim A_A$ coincide \cite{Zaks}.

\begin{lemma}\label{Gorlemma}
Assume that $k$ is Gorenstein, and that $A$ is finite and projective over $k$ with finite self-injective dimension $d$ on both sides. Then $A^\ev$ has self-injective dimension at most $2d$ on either side, and
\[
\Ext^i_{A^\ev}(A,A)\to \sExt^i_{A^\ev}(A,A)
\]
is an isomorphism for $i> d$.
\end{lemma}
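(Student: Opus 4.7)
The proof naturally splits in two.

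For the first assertion, pick injective resolutions $A \to I^\bullet$ of length $d$ over $A$ and $A \to J^\bullet$ of length $d$ over $A^\op$, and consider the total complex of $I^\bullet \otimes J^\bullet$. It has amplitude at most $2d$. Since $A$ (and $A^\op$) is $k$-flat (being $k$-projective), a standard Künneth/spectral sequence argument shows that the canonical map $A^\ev = A\otimes A \to I^\bullet \otimes J^\bullet$ is a quasi-isomorphism of $A^\ev$-modules. The nontrivial step is then to see that each $I^p \otimes J^q$ is injective over $A^\ev$. For this I would use the $k$-finite projectivity of $A$: every injective $A$-module is a summand of a coinduced module $\Hom_k(A, E)$ for some injective $k$-module $E$, and finite projectivity delivers a natural isomorphism
\[
\Hom_k(A, E) \otimes \Hom_k(A^\op, F) \cong \Hom_k(A^\ev, E \otimes F).
\]
Hence $I^p \otimes J^q$ is a summand of a module coinduced from $k$-modules to $A^\ev$-modules. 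Since coinduction is right adjoint to the exact forgetful functor, it preserves injectives, and this is enough.

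For the second assertion, I would exploit the coinduction functor $R = \Hom_k(A^\op, -)$ from $A$-modules to $A^\ev$-modules. It is right adjoint to the functor restricting to the left $A$-action, and it is exact because $A^\op$ is $k$-projective. Exactness gives the derived adjunction
\[
\Ext^i_{A^\ev}(M, RN) \cong \Ext^i_A(M, N)
\]
for any $A^\ev$-module $M$ and $A$-module $N$. Applied to the injective $A$-resolution $A \to I^\bullet$ of length $d$, the functor $R$ produces an injective $A^\ev$-resolution $RA \to R I^\bullet$ of length $d$, so $\idim_{A^\ev}(RA) \leq d$. The unit of the adjunction $\eta\colon A \to RA$ is injective and $A^\ev$-linear, yielding a short exact sequence $0 \to A \to RA \to C \to 0$ of $A^\ev$-modules. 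By Part 1, $A^\ev$ is Iwanaga Gorenstein of dimension at most $2d$, so modules of finite injective dimension are perfect; in particular $RA$ is perfect and vanishes in $\csing(A^\ev)$, whence the triangle collapses to a stable isomorphism $A \simeq C[-1]$ in $\csing(A^\ev)$. Iterating the construction with $C$ in place of $A$ produces a Tate-style coresolution of $A$ by modules each of injective dimension at most $d$ over $A^\ev$. Comparing this coresolution with the usual injective resolution via the long exact $\Ext$ sequence, and using the identification of stable $\Ext$ with the Verdier quotient $\cdbcat(A^\ev) \to \csing(A^\ev)$, yields the desired isomorphism $\Ext^i_{A^\ev}(A,A) \to \sExt^i_{A^\ev}(A,A)$ for all $i > d$.

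The main obstacle is the injectivity verification in Part 1: over a general Gorenstein base $k$ a tensor product of injective $k$-modules need not be injective, so the $k$-finite projectivity of $A$ is essential to realize $I^p \otimes J^q$ as a summand of a module coinduced from $k$-modules. In Part 2 the key insight is that the coinduction $R$ produces a Tate-style coresolution automatically, which explains why the stable comparison becomes an isomorphism precisely in the range $i > d$ predicted by the first part.
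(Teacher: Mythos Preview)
Your Part~1 has a real gap. Coinduction $\Hom_k(A^\ev,-)$ preserves injectives only when applied to injective $k$-modules, and you have not shown that $E\otimes_k F$ is injective over $k$; over a Gorenstein base of positive Krull dimension it generally is not. What one can salvage is that injectives over a Gorenstein ring of dimension $g$ have flat dimension at most $g$, so (via Ishikawa's lemma that injective $\otimes$ flat is injective) $E\otimes_k F$ has injective dimension at most $g$; your argument then yields only $\idim A^\ev\leq 2d+g$, weaker than the asserted $2d$. The paper proceeds quite differently: it computes $\RHom_{A^\ev}(M,A^\ev)$ through a chain of adjunctions together with Gorenstein $k$-duality $(-)^{**}\simeq{\rm id}$, arriving at $\RHom_{A^\op}(\RHom_A(M,A)^*,A)$, which is visibly concentrated in cohomological degrees at most $\idim({}_AA)+\idim(A_A)=2d$.

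Your Part~2 does not close. The coinduction trick correctly gives $\idim_{A^\ev}RA\leq d$, and the short exact sequence $0\to A\to RA\to C\to 0$ with $RA$ perfect yields a dimension shift $\Ext^i_{A^\ev}(A,C_n)\cong\Ext^{i+1}_{A^\ev}(A,C_{n-1})$ valid only for $i>d$. Iterating therefore reduces the question ``is $\Ext^j\to\sExt^j$ an isomorphism?'' for one $j>d$ to the same question at degree $d+1$ for a different second variable $C_{j-d-1}$, and there the argument stalls---no base case is ever reached. The paper instead feeds the computation from its Part~1 (with $M=A$) into Buchweitz's long exact sequence
\[
\cdots\to\sExt^{i-1}_{A^\ev}(A,A)\to{\rm Tor}^{A^\ev}_{-i}(A,A^\vee)\to\Ext^i_{A^\ev}(A,A)\to\sExt^i_{A^\ev}(A,A)\to\cdots,
\]
where $A^\vee=\RHom_{A^\ev}(A,A^\ev)\simeq\RHom_{A^\op}(A^*,A)$ is concentrated in degrees $\leq d$; this kills the ${\rm Tor}$ term for $i>d$ and gives the isomorphism directly. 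Your coresolution by itself does not produce this amplitude bound on $A^\vee$, which is the essential input.
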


If would be enough to assume that $A$ is perfect over $k$ here (i.e. having a finite resolution by finitely generated projectives), but this would involve some additional dg algebra techniques. 

\begin{proof} 
Denote $(\-)^*=\RHom_k(\-,k)$. Since $k$ is Gorenstein $(M^*)^*\simeq M$ whenever $M$ is in $\dbcat(k)$ (e.g.~ \cite{Buc}).

If $M$ is a finitely generated $A^\ev$-module then 
\begin{align*}
\RHom_{A^\ev}(M,A^{\ev})& \simeq \RHom_{A^\ev}(M,\Hom(A^*,A))
\\
& \simeq \RHom_{A}(A^*\otimes^{\sf L}_AM,A)\\ &\simeq \RHom_{A}(A^*,\RHom_A(M,A))\\
&\simeq  \RHom_{A^\op}(\RHom_A(M,A)^*,A) .
\end{align*}
The last quasi-isomorphism follows from the Gorenstein duality mention above (which applies since $A$ and $\RHom_A(M,A)$ are in $\dbcat(k)$). The other quasi-isomorphisms are standard adjunctions. 

By inspection the last term in this string of isomorphisms has cohomology concentrated in degrees  at most $\idim {{}_AA}+\idim A_A = 2d$. This implies that $\idim A^\ev_{A^\ev}\leq 2d$. The same argument shows that $\idim {{}_{A^\ev}A^\ev}\leq 2d$.

Clearly $A^\ev$ is two-sided noetherian, therefore we've shown that it is Iwanaga Gorenstein. Because of this we can use the long exact sequence
\[
\cdots\to \sExt^{i-1}_{A^\ev}(A,A)\to {\rm Tor}^{A^\ev}_{-i}(A,A^\vee)\to \Ext^i_{A^\ev}(Y,X)\to\sExt^i_{A^\ev}(A,A)\to \cdots
\]
from \cite[theorem 6.2.5]{Buc}, where $A^\vee=\RHom_{A^\ev}(A,A^{\ev})$. Using the isomorphism above with $M=A$ we see that $
A^\vee\simeq \RHom_{A}(A^*,A)
$ 
has cohomology in degrees $d$ and below. This means that ${\rm Tor}^{A^\ev}_{-i}(A,A^\vee)=0$ for $i>d$, and the statement follows from the long exact sequence.
\end{proof}

\begin{theorem}\label{generalVersion}
Assume that $k$ is Gorenstein, and that $A$ and $B$ are finite and projective as $k$-modules, and that both have (left and right) self-injective dimension less than or equal to $d$. 
If $A$ and $B$ are connected by a stable equivalence of Morita type then there is an isomorphism
\[
 {\rm HH}^{>d}(A,A)\cong {\rm HH}^{>d}(B,B)
\]
which preserves the cup product, and (after shifting) the \underline{restricted} graded Lie-algebra structure.
\end{theorem}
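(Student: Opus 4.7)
The plan is to combine Theorem~\ref{keytheorem} with Lemma~\ref{Gorlemma} via the commutative diagram~(\ref{StabilisationDiagram}), and then transport the restricted Lie structure to the chain level using the $B_\infty$-functoriality of section~\ref{FunctorialitySubsec}.

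First, form the category $T$ of section~\ref{SEMTsection} from the bimodule $M$ implementing the stable equivalence of Morita type. By Lemma~\ref{Gorlemma} applied to $A$ and $B$, the outer vertical stabilization maps of diagram~(\ref{StabilisationDiagram}) are isomorphisms in degrees $> d$; by Theorem~\ref{keytheorem}, the bottom horizontal maps $\underline{R}_A$ and $\underline{R}_B$ are isomorphisms in all degrees. Composing these four isomorphisms around the middle term $\underline{\mathrm{HH}}^{>d}(T,T)$ yields a graded $k$-module isomorphism $\mathrm{HH}^{>d}(A,A) \cong \mathrm{HH}^{>d}(B,B)$; since every map involved is a homomorphism of graded rings, this isomorphism already preserves the cup product.

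Second, to upgrade this to an isomorphism of restricted graded Lie algebras I plan to realize it through the top horizontal maps $R_A$ and $R_B$ of~(\ref{StabilisationDiagram}) instead. By Keller's observation recalled in section~\ref{FunctorialitySubsec}, these restriction maps lift to $B_\infty$-algebra morphisms on the Hochschild cochain complexes, and hence by Theorem~\ref{E2restrictedLie} they induce restricted graded Lie algebra homomorphisms on shifted cohomology. A diagram chase in~(\ref{StabilisationDiagram}) will show that $R_A$ and $R_B$ are themselves isomorphisms in degrees $> d$ as soon as the middle vertical stabilization $\mathrm{HH}^{>d}(T,T) \to \underline{\mathrm{HH}}^{>d}(T,T)$ is an isomorphism; the composite $R_A \circ R_B^{-1}$ is then the desired restricted graded Lie algebra isomorphism, and its identification with the isomorphism built in the previous step is immediate from commutativity of~(\ref{StabilisationDiagram}).

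The main obstacle is therefore to prove the stabilization isomorphism for $T$ in degrees $> d$. Since $T$ is not among the hypothesized algebras, Lemma~\ref{Gorlemma} does not apply directly; however, I would try to extend its proof by bounding the cohomology of $T^\vee = \RHom_{T^{\mathrm{ev}}}(T, T^{\mathrm{ev}})$. The short exact sequence $0 \to LM \to LA \oplus LB \to T \to 0$ built in the proof of Theorem~\ref{keytheorem} exhibits $T$ as a finite extension in $\cdbcat(T^{\mathrm{ev}})$ of bimodules coming from $A^{\mathrm{ev}}$, $B^{\mathrm{ev}}$, and $A^{\mathrm{op}} \otimes B$; together with the hypothesis that $A, B$ have self-injective dimension at most $d$ and that $M$ has finite flat dimension on both sides over $k$, this should yield the vanishing of $\mathrm{Tor}^{T^{\mathrm{ev}}}_{-i}(T, T^\vee)$ for $i > d$ and hence the required stabilization isomorphism via the Buchweitz long exact sequence employed in the proof of Lemma~\ref{Gorlemma}.
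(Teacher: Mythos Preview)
Your first two paragraphs are exactly the paper's proof: it invokes diagram~(\ref{StabilisationDiagram}), cites Theorem~\ref{keytheorem} for the lower horizontal isomorphisms, Lemma~\ref{Gorlemma} for the outer vertical isomorphisms in degrees $>d$, Theorem~\ref{E2restrictedLie} for the structure-preservation of the upper horizontals, and then simply writes ``Combining these facts finishes the proof.''

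Your third paragraph goes further than the paper does. The paper never explicitly verifies that the middle stabilisation $\HH^{>d}(T,T)\to\sHH^{>d}(T,T)$ (equivalently $R_A$ or $R_B$) is an isomorphism; it appears to leave this to the reader. You are right that this point is not automatic from the four facts listed: commutativity of the two squares only tells you that $R_A$, $R_B$ and $s_T$ are isomorphisms \emph{simultaneously}, and without surjectivity of $R_A$ in degrees $>d$ one cannot conclude that the non-linear $p$-power is preserved by the composite $\Phi$. Your plan to bound $T^\vee$ via the triangle $LM\to LA\oplus LB\to T$ and feed the result into Buchweitz's long exact sequence is reasonable, but note two hazards: you must first check that $T^{\rm ev}$ is itself Iwanaga Gorenstein (so that the long exact sequence of \cite[Theorem~6.2.5]{Buc} is available), and you must control the bound so that it does not drift above $d$---the triangular category $T$ typically has strictly larger self-injective dimension than $A$ and $B$. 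A possibly cleaner alternative is to write down the unstabilised Mayer--Vietoris sequence coming from the same triangle, map it to the stable one appearing in the proof of Theorem~\ref{keytheorem}, and apply the five lemma; this reduces the question to establishing the analogue of Lemma~\ref{Gorlemma} for $\Ext^*_{A\- B}(M,M)$, which is closer to the hypotheses you already have on $A$, $B$ and $M$.
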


\begin{proof}
This follows from observing the commutative diagram (\ref{StabilisationDiagram}) above. The two lower horizontal maps are isomorphisms by Theorem \ref{keytheorem}. The outside vertical maps are isomorphisms in degrees $d$ and above by lemma \ref{Gorlemma}. The upper horizontal maps preserve all the required structure by Theorem \ref{E2restrictedLie}. Combining these facts finishes the proof. 
\end{proof}

Our main interest, coming from the field of modular representation theory, is the case of self-injective algebras over fields. Specialising Theorem \ref{generalVersion} to this context yields Theorem \ref{stableinvariancetheorem} from the introduction, with one caveat to be explained in the next subsection.

\subsection{Transfer maps} 
\label{transfersection}
To finish the proof of Theorem 
\ref{stableinvariancetheorem} from the introduction, we need to explain why the isomorphism of Theorem \ref{generalVersion} is given by a transfer map.

 Transfer maps were introduced by Bouc  \cite{Bouc}  for Hochschild homology, and  defined for the Hochschild cohomology of symmetric algebras by Linckelmann  \cite{Linc}. They are the classical 
 way of comparing the Hochschild cohomology of two algebras 
 connected by a stable equivalence of Morita type. 
 
 Since the definition of stable equivalence of Morita type we use here is more general than usual, we need to explain what we mean by a transfer map.

If $M$ and $N$ are $A\-B$ and $B\-A$ bimodules respectively, we have a map
\[
\Ext_{A\-A}(A,A)\to \Ext_{B\-B}(N\otimes^{\sf L}_AM,N\otimes^{\sf L}_AM),\quad f\mapsto N\otimes^{\sf L}_A f\otimes^{\sf L}_AM.
\]
Assuming $M$ and $N$ are each of finite flat dimension on both sides, this descends to a map
\[
\sExt_{A\-A}(A,A)\to \sExt_{B\-B}(N\otimes^{\sf L}_AM,N\otimes^{\sf L}_AM).
\]
Suppose we are given an isomorphism $N\otimes^{\sf L}_AM\simeq B$ in $\csing(B\-B)$. The \emph{transfer map} is by definition the composition
\[
{}_N{\rm tr}_M : \sExt_{A\-A}(A,A)\to \sExt_{B\-B}(N\otimes^{\sf L}_AM,N\otimes^{\sf L}_AM)\xrightarrow{\cong} \sExt_{B\-B}(B,B).
\]
In the case of an algebra over a field, this agrees with the more concrete definition given by Linckelmann (where one usually uses $N=M^*$). In this context the transfer map is manifestly an algebra homomorphism, but it turns out \emph{not} to respect the restricted Lie algebra structure of $\sExt_{A\-A}(A,A)$ and  $\sExt_{B\-B}(B,B)$ in general, see the counter-example in \cite[section 6]{RyD}.

If moreover $M\otimes^{\sf L}_AN\simeq A$ then 
${}_N{\rm tr}_M$ and ${}_M{\rm tr}_N$ are inverse isomorphisms. Surprisingly, it \emph{is} true in this case that these isomorphisms respect the restricted Lie algebra structure; this follows from Theorem \ref{generalVersion} and the following.

\begin{proposition}\label{transferprop}
The isomorphism $\sExt_{A\-A}(A,A)\cong\sExt_{B\-B}(B,B)$ of Theorem \ref{keytheorem} coincides with ${}_N{\rm tr}_M$.
\end{proposition}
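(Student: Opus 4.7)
The plan is to extract from the proof of Theorem \ref{keytheorem} a concrete characterisation of the isomorphism $\Phi := \underline{R}_B \circ \underline{R}_A^{-1}: \sExt_{A\-A}(A,A) \xrightarrow{\cong} \sExt_{B\-B}(B,B)$, and then to verify that ${}_N{\rm tr}_M$ satisfies precisely that same characterisation.

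First, I would revisit the Mayer--Vietoris sequence
\[
\cdots\to \sExt_{T\-T}(T,T) \xrightarrow{(\underline{R}_A,\underline{R}_B)} \sExt_{A\-A}(A,A) \oplus \sExt_{B\-B}(B,B) \xrightarrow{\phi} \sExt_{A\-B}(M,M)\to\cdots
\]
from the proof of Theorem \ref{keytheorem}, together with the identification made there of $\phi$ as the difference of $-\otimes_A^{\sf L} M$ and $M\otimes_B^{\sf L} -$. Because both components of $\phi$ are isomorphisms (by the stable equivalence hypothesis), $\phi$ is surjective in every degree, and so the preceding connecting map vanishes; in particular $(\underline{R}_A,\underline{R}_B)$ is injective with image exactly $\ker \phi$. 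That kernel is the graph of the isomorphism $\Phi$ characterised by
\[
\Phi(f) = g \quad\Longleftrightarrow\quad f \otimes_A^{\sf L} M \ =\  M \otimes_B^{\sf L} g \ \text{ in }\ \sExt_{A\-B}(M,M).
\]

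Second, I would unpack the definition of ${}_N{\rm tr}_M(f)$: this is the element $g \in \sExt_{B\-B}(B,B)$ corresponding to $N \otimes_A^{\sf L} f \otimes_A^{\sf L} M \in \sExt_{B\-B}(N \otimes_A^{\sf L} M, N \otimes_A^{\sf L} M)$ under the isomorphism $N \otimes_A^{\sf L} M \simeq B$ of Proposition \ref{SEMTprop}. Applying $M \otimes_B^{\sf L} -$ and invoking the companion isomorphism $M \otimes_B^{\sf L} N \simeq A$, bifunctoriality of the derived tensor product gives
\[
M \otimes_B^{\sf L} {}_N{\rm tr}_M(f)\ \simeq\ (M \otimes_B^{\sf L} N) \otimes_A^{\sf L} f \otimes_A^{\sf L} M\ \simeq\ A \otimes_A^{\sf L} f \otimes_A^{\sf L} M\ =\ f \otimes_A^{\sf L} M
\]
in $\sExt_{A\-B}(M,M)$. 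By the characterisation above, this forces $\Phi(f) = {}_N{\rm tr}_M(f)$, as required.

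The main obstacle I expect is purely bookkeeping: the displayed computation is only an equality in $\sExt_{A\-B}(M,M)$ once one knows that the isomorphisms $M\otimes_B^{\sf L} N \simeq A$ and $N \otimes_A^{\sf L} M \simeq B$ from Proposition \ref{SEMTprop} are compatible in the sense of the triangle identity for an adjoint equivalence. This compatibility is a by-product of the uniqueness-of-inverses argument at the end of Proposition \ref{SEMTprop}, and carrying it through cleanly is the only technical check required.
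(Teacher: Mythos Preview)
Your argument is correct and is essentially the same as the paper's. The paper packages the characterisation $\Phi(f)=g\iff f\otimes_A^{\sf L}M=M\otimes_B^{\sf L}g$ as the commutativity of a square with $\underline{R}_A$, $\underline{R}_B$, $-\otimes_A^{\sf L}M$ and $M\otimes_B^{\sf L}-$ on its edges, and then reads off $\Phi={}_N{\rm tr}_M$ by inverting $M\otimes_B^{\sf L}-$ with $N\otimes_A^{\sf L}-$; your Mayer--Vietoris/graph formulation and subsequent verification are the same computation in different words.
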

\begin{proof}[Sketch of proof.]
One can check that the following square commutes:
\[
\begin{tikzcd}
    \sExt_{T\-T}(T,T) \ar[r,"\underline{R}_B"]\ar[d,"\underline{R}
    _A"'] & \sExt_{B\-B}(B,B) \ar[d,"M \otimes_B^{\sf L} \-"] \\
    \sExt_{A\-A}(A,A) \ar[r, "\-\otimes_A^{\sf L} M"] \ar[ur, dashed]& \sExt_{A\-B}(M,M).
\end{tikzcd}
\]
The square consists of isomorphisms, which gives us the dashed arrow $\underline{R}_B(\underline{R}_A)^{-1}$. This is the isomorphism constructed in the proof of Theorem \ref{keytheorem}. Since $N\otimes_B^{\sf L} \- $ is the  inverse of $M \otimes_B^{\sf L} \-$, the dashed arrow is also given by $N \otimes_B^{\sf L} \-\otimes_B^{\sf L} M = {}_N{\rm tr}_M$.
\end{proof}

\begin{remark}[Generalisation to all of stable Hochschild cohomology]
The proof of our main theorem goes through stable Hochschild cohomology, but we do not make serious use of its structure, and we are not significantly interested in it as an invariant by itself (only because ordinary Hochschild cohomology is more computatable and more widely used). Nevertheless, we quickly point out now that one can generalize our main theorem to all of $\sHH(A,A)$ (so, into negative degrees).

Wang has studied the structure of stable Hochschild cohomology \cite{Wang}. He constructs a stable version of the Hochschild cochain complex and 
establishes an analogue of the Deligne conjecture for it. It follows from this and from Theorem \ref{E2restrictedLie} that $\susp \sHH(A,A)$ is naturally a 
restricted graded Lie algebra as well. By inspecting Wang's stable cochain 
complex, the arguments above can be used to show that this structure is invariant 
under stable equivalence of Morita type.

\end{remark}

\section{Applications in modular representation theory}
\subsection{The classification of tame symmetric algebras}
\label{tamesym}

Since we now know the restricted Lie structure of ${\rm HH}^1(A,A)$ to be an 
invariant under stable equivalence of Morita type, we can hope to use it in 
classification problems.

We contribute to improve the (as yet incomplete) classification of algebras of dihedral, semi-dihedral and quaternion type in characteristic $2$ up to stable equivalence of Morita type, after the 
work of Zhou and Zimmermann \cite{ZZ}, and  Taillefer \cite{Taillefer}. The Lie brackets of ${\rm HH}^1(A,A)$ for some of these algebras have been computed in \cite{Taillefer}. In \cite{ER} the authors prove the solvability of ${\rm HH}^1(A,A)$
when the Ext-quiver of  algebras  of dihedral, semi-dihedral and quaternion type  does not have loops. In \cite{RSS} the authors prove the solvability of  ${\rm HH}^1(A,A)$
for all algebras of 
dihedral, semi-dihedral and 
quaternion type excluding blocks of group algebras with Klein defect group. Thus, we 
 compute the restricted Lie structure for most of the remaining cases. The dihedral case has been 
completely classified in \cite {ZZ} and the two remaining families are considered in \cite{Taillefer}.  
Our calculations failed to distinguish any algebras which are currently not known 
to be in different stable Morita equivalence classes. This provides some evidence that most these classes are in fact stably equivalent.
In order to prevent others 
from having to repeat our calculations, we outline them below. 

In what follows we use
the notation of \cite{Taillefer}.

\begin{itemize}
    \item The algebras $SD(1\mathcal{A})^k_2(c,1)$ and $SD(1\mathcal{A})^k_1(c{'},
    1)$, for $c \neq c'$ non-zero, are 
    derived equivalent and their first  Hochschild cohomologies are isomorphic as
    Lie algebras. Using the same isomorphism
     of \cite[proposition 4.9]{Taillefer},  ${\rm HH}^1(SD(1\mathcal{A})^k_2(c,1))$ and ${\rm
    HH}^1(SD(1\mathcal{A})^k_1(c{'}, 1))$
    are isomorphic as restricted Lie algebras. 
    Therefore we are still not able to say whether or not these algebras are stably Morita equivalent.

    \item In the case of $Q(1\mathcal{A})^k_2(0,d)$ and
    $Q(1\mathcal{A})^k_2(0,d')$ for $k$ odd and $d\neq d{'}$ we have an isomorphism
    of restricted Lie algebras, but one needs to slightly change the isomorphism of \cite[Corollary 5.6]{Taillefer}. More precisely, the adjusted isomorphism 
    acts as: $\psi\mapsto\lambda\psi'$, $\chi\mapsto\tilde{\mu}\chi'$ and 
    $\theta_{-2} \mapsto \mu \theta_{-2}'$,
    where $\lambda^2=(\frac{d}{d'})^3$, $\tilde{\mu}=\frac{\lambda'}{d}$
    and $\mu=\frac{d}{d'\tilde{\mu}}$, in the notation of \cite{Taillefer}.
    \item In the case ${c, d} \neq {c', d'}$ with $cd \neq 0$ and $c{'}d{'} \neq 0$ we couldn't establish an isomorphism of restricted Lie algebras. Therefore we still do not know if $Q(1\mathcal{A})^k_2 (c, d)$ and $Q(1\mathcal{A})^k_2(c{'}, d{'})$ for  are stably equivalent of Morita type or not.
\end{itemize}

\subsection{Blocks of group algebras of defect \texorpdfstring{$2$}{}  and quantum complete intersections}
Let $p$ be an odd prime. Let $B$ be a block of a group algebra with defect group 
$D$ of order $p^2$ and let $C$ the  Brauer correspondent of $B$
(a block of $kN_G(D)$). Then by \cite{Rou} there is a 
stable equivalence of Morita type between $B$ and $C$. 
In addition, if $C$ has one simple module, then by \cite{KL} $B$ has
one simple module. If $B$ is a nilpotent block, then $B$ is Morita
equivalent to $kD$ and therefore $\rm{HH}^1(B,B)$ is a Jacobson-Witt algebra. Otherwise, 
following the same arguments of the proof of
\cite[Corollary 1.4]{BKL}, we have that $D$ is elementary 
abelian of rank 2 and 
a basic algebra of $C$ is a quantum
complete intersection $A$ of rank 2 with quantum parameter $q$
which is a root of unity. In \cite{BKL} 
the restricted structure of $\rm{HH}^1(A,A)$ is calculated. Then we have   

\begin{proposition}
Let $B$ a block of a group algebra $kG$ with defect group of order $p^2$ and with Brauer correspondent $C$. Assume that $C$ has one simple module and that $B$ is not nilpotent. Then $\rm{HH}^1(B)=\mathcal{H}\oplus \mathcal{S}$, where $\mathcal{H}$ is a maximal p-toral subalgebra and $\mathcal{S}$
is the derived subalgebra of $\rm{HH}^1(B)$.
\end{proposition}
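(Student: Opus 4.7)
The plan is to reduce the claim to the explicit restricted Lie algebra computation carried out for rank-two quantum complete intersections in \cite{BKL}, using the invariance provided by Theorem \ref{stableinvariancetheorem}. Most of the ingredients have already been assembled in the paragraph preceding the statement: by \cite{Rou} there is a stable equivalence of Morita type between $B$ and its Brauer correspondent $C$; and under our hypotheses (that $C$ has a unique simple module and that $B$ is not nilpotent), the argument of \cite[Corollary 1.4]{BKL} together with \cite{KL} forces the defect group $D$ to be elementary abelian of rank two, and identifies a basic algebra of $C$ with a rank-two quantum complete intersection $A$ whose quantum parameter is a root of unity.

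The next step is to observe that a Morita equivalence is a (trivial) instance of a stable equivalence of Morita type, and that the class of such equivalences is closed under composition. Hence $B$ and $A$ are connected by a stable equivalence of Morita type between finite dimensional self-injective $k$-algebras in characteristic $p$, and Theorem \ref{stableinvariancetheorem} yields an isomorphism of restricted Lie algebras
\[
{\rm HH}^1(B,B) \;\cong\; {\rm HH}^1(A,A).
\]

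To finish I would invoke \cite{BKL}, in which the restricted Lie algebra ${\rm HH}^1(A,A)$ is described explicitly and the decomposition ${\rm HH}^1(A,A) = \mathcal{H} \oplus \mathcal{S}$ is exhibited, with $\mathcal{H}$ a maximal $p$-toral subalgebra and $\mathcal{S}$ the derived subalgebra. Transporting this decomposition along the isomorphism above produces the desired decomposition of ${\rm HH}^1(B,B)$: the image of $\mathcal{S}$ is the derived subalgebra, because the bracket is preserved; the image of $\mathcal{H}$ is a $p$-toral subalgebra of the same dimension, because the $[p]$-operation is preserved; and this image is maximal because maximality of a $p$-toral subalgebra is formulated entirely in terms of the restricted Lie structure.

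The only point that merits checking is not really algebraic but a matter of matching definitions: one has to verify that the Rouquier stable equivalence for defect-two blocks, composed with the Morita equivalence to the basic algebra, fits the definition of stable equivalence of Morita type used in Theorem \ref{stableinvariancetheorem}. This is expected to be routine, since the bimodules involved are projective on both sides, and once it is in place the proposition is a direct application of Theorem \ref{stableinvariancetheorem} together with the calculation of \cite{BKL}.
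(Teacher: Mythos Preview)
Your proposal is correct and follows precisely the route the paper takes: the paragraph preceding the proposition already assembles the stable equivalence of Morita type from \cite{Rou}, the identification of the basic algebra of $C$ with a rank-two quantum complete intersection $A$ via \cite{KL} and \cite[Corollary~1.4]{BKL}, and the restricted Lie structure computation of ${\rm HH}^1(A,A)$ from \cite{BKL}; the proposition is then stated as an immediate consequence via Theorem~\ref{stableinvariancetheorem}. You have simply made the implicit transport step explicit, and your remarks about why the derived subalgebra and maximal $p$-torality are preserved under a restricted Lie isomorphism are the right justifications.
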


Note that the validity of Brou\'e's abelian defect group conjecture 
would imply 
that $B$ is derived equivalent to $C$ and by a result of Roggenkamp and Zimmermann  \cite[Proposition 6.7.4]{ZR} we would have a Morita equivalence between $B$ and $C$. Kessar proves in  \cite{Kess} that the Morita equivalence holds for $p=3$.

\section{Applications in commutative algebra}
Suppose $A$ is a commutative Gorenstein ring essentially of finite type over a 
field $k$, with Krull dimension $d$. Then $A^\ev$ is  Gorenstein of 
Krull dimension $2d$, and   $\Ext_{A^\ev}^i(A,A^{\ev})=0$ for $i>d$ (by 
\cite[Theorem 2.1 (iv)]{AI} for example). Knowing this, the same proof as for 
theorem \ref{generalVersion} gives:

\begin{theorem}\label{commutativetheorem}
Assume that $A$ and $B$ are commutative Gorenstein rings essentially of finite type over a field $k$, both of Krull dimension less than or equal to $d$. 
If $A$ and $B$ are connected by a stable equivalence of Morita type then there is an isomorphism $
 {\rm HH}^{>d}(A,A)\cong {\rm HH}^{>d}(B,B)
 $ 
which preserves the cup product, and the restricted graded Lie-algebra structure.
\end{theorem}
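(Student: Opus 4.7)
The plan is to replicate the proof of Theorem \ref{generalVersion} verbatim, once we have established a commutative analog of Lemma \ref{Gorlemma}: namely, that the stabilization map
\[
\HH^i(A,A) \longrightarrow \sHH^i(A,A)
\]
is an isomorphism for $i>d$ (and similarly for $B$). This is the only step that does not transfer directly, because the Gorenstein properties of $A^\ev$ are now verified by different (commutative) means than in Lemma \ref{Gorlemma}.

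First I would record that $A^\ev = A\otimes_k A$ is again a commutative noetherian ring essentially of finite type over $k$, and a standard argument (tensor products of Gorenstein $k$-algebras over a field are Gorenstein) makes it Gorenstein of Krull dimension $\leq 2d$. In particular, $A^\ev$ is Iwanaga Gorenstein, so Buchweitz's long exact sequence
\[
\cdots \to \sExt^{i-1}_{A^\ev}(A,A) \to {\rm Tor}^{A^\ev}_{-i}(A,A^\vee) \to \Ext^i_{A^\ev}(A,A) \to \sExt^i_{A^\ev}(A,A) \to \cdots
\]
with $A^\vee = \RHom_{A^\ev}(A,A^\ev)$ is available, exactly as in the proof of Lemma \ref{Gorlemma}.

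Next I would use the cited input $\Ext^i_{A^\ev}(A,A^\ev)=0$ for $i>d$ from \cite[Theorem 2.1 (iv)]{AI}. This means $A^\vee$ has cohomology concentrated in degrees $\leq d$, so ${\rm Tor}^{A^\ev}_{-i}(A,A^\vee)=0$ for $i>d$. Plugging into the sequence above yields the desired isomorphism $\Ext^i_{A^\ev}(A,A)\cong \sExt^i_{A^\ev}(A,A)$ for $i>d$, which is the commutative version of Lemma \ref{Gorlemma}.

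With this in hand, the proof of Theorem \ref{generalVersion} goes through unchanged: set up the diagram \eqref{StabilisationDiagram} for $A$, $B$, and the triangular category $T$; the outer vertical stabilization maps are isomorphisms in degrees $>d$ by what we just proved; the lower horizontal restrictions $\underline{R}_A$ and $\underline{R}_B$ are isomorphisms by Theorem \ref{keytheorem}; and the upper horizontal restrictions preserve the cup product and (after shifting) the restricted graded Lie structure by Theorem \ref{E2restrictedLie}. Chasing the diagram produces the isomorphism $\HH^{>d}(A,A)\cong \HH^{>d}(B,B)$ with all required structure. The main (and only nontrivial) obstacle is the verification of the Gorenstein bound on $A^\ev$ and the vanishing of the Ext correction term; once this is in place, everything else is pure diagram chasing.
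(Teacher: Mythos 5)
Your proposal is correct and follows essentially the same route as the paper: the paper likewise notes that $A^\ev$ is Gorenstein of Krull dimension $2d$ and that $\Ext^i_{A^\ev}(A,A^\ev)=0$ for $i>d$ by \cite[Theorem 2.1 (iv)]{AI}, and then invokes the proof of Theorem \ref{generalVersion} (i.e.\ the Buchweitz long exact sequence argument of Lemma \ref{Gorlemma} together with diagram (\ref{StabilisationDiagram}), Theorem \ref{keytheorem} and Theorem \ref{E2restrictedLie}) verbatim. Your sketch just spells out the details the paper leaves implicit.
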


\begin{example}\label{KnorrerEx}
A well-known family of stable equivalences of Morita type is given by Kn\"orrer's periodicity theorem  \cite{Knorrer}---but this is usually not phrased in these terms. 

Let $S=k[x_1,...,x_n]$ and let $f$ be a nonzero element of $S$. We set $A=S/(f)$ and $B=S[u,v]/(f+uv)$. Theses rings are Gorenstein of dimension $n-1$ and $n+1$ respectively. We consider $U=A[u]=B/(v)$ and $V=A[v]=B/(u)$ as $A\-B$ and $B\-A$ bimodules respectively. One can check that $U\otimes^{\sf L}_B V\simeq A$ in $\csing(A^\ev)$ and $V\otimes^{\sf L}_A U\simeq B$ in $\csing(B^\ev)$. Hence $A$ and $B$ are stably equivalent of Morita type (no assumptions on the field $k$ are needed). It follows from Theorem \ref{commutativetheorem} that  there is an isomorphism ${\rm HH}^{>n+1}(A,A)\cong {\rm HH}^{>n+1}(B,B)$ preserving the product and the restricted Lie structure.
\end{example}

For hypersurface rings
stable Hochschild cohomology is always $2$-periodic, and so ordinary Hochschild cohomology  becomes $2$-periodic above the dimension of the ring. In the case of Kn\"orrer periodicity, example \ref{KnorrerEx} says that ${\rm HH}^*(A,A)$ and $ {\rm HH}^*(B,B)$ have the same periodic part.

\appendix
\section{Well-definedness of the \texorpdfstring{$p$}{}-power operation}

Here we will explain briefly some ideas due to May and Cohen which can be used to exhibit the $p$-power operation on Hochschild cohomology.  These ideas are not well-known outside of the topology community, so it seems worthwhile to sketch them here. We have relegated the arguments to an appendix because we need to assume some knowledge of operads.  Our thanks are due to Victor Turchin for explaining Theorem \ref{Maythm} to us.

Our goal here is to finish the proof of Theorem \ref{E2restrictedLie} above: we need to show that the $p$-power operation is well-defined on the homology of a $B_\infty$-algebra. 
But let us point out that it is quite easy to see that the $p$-power operation on ${\rm HH}^1(A,A)$ is well-defined
; this appendix is only needed for classes in ${\rm HH}^{>1}(A,A)$.  Even though our main interest is ${\rm HH}^1(A,A)$, we include this appendix for the sake of completeness, and because these arguments are difficult to find in the literature.

As explained in section \ref{restrictedsubsection}, one can use the model $\coder(BA,BA)$ to exhibit the restricted structure on Hochschild cohomology. It is stated in \cite[Lemma 4.2]{ZII} that this approach yields a well-defined $p$-power operation, but the proof there is  incomplete. To establish well-definedness, one must show that for every odd cycle $x\in \coder^i(BA,BA)$ and every $y\in \coder^{i+1}(BA,BA)$ there is an element $w$ such that $(x+\partial(y))^p = x^p +\partial(w)$. It is not checked in \cite{ZII} that $w$ is a coderivation, and unfortunately the choice used there is not. It seems to be \emph{extremely} complicated to construct directly a suitable $w$, and hence a nonconstructive method is needed.

Here we assume knowledge of operads (specifically, $\Sigma$-split dg operads \cite{Hinich}). We work with the brace operad $\mathcal{B}$ \cite{GV}, which is an example of a $\Sigma$-split $E_2$-operad. It seems to be necessary to involve this machinery to establish that the $p$-power operation is well defined. The following two theorems together settle this matter.

The first is a result of May \cite{May};  it is a general template for building cohomology operations. Since  \cite{May} appeared just before operads were first defined, and since it was written in topological terms, it seems worthwhile to give an algebraic argument here.

\begin{theorem}[May]\label{Maythm}
Let $\O$ be a $\Sigma$-split dg operad, and let $A$ be a dg $\O$-algebra. Suppose that $\xi \in \O(n)^j$ is such that $\partial(\xi)$ becomes zero in $\O(n)\otimes_{\Sigma_n} ({\rm sign})$. Then, for $i$ odd, the operation
\[
{\rm H}^i(A)\to {\rm H}^{ni+j}(A),\quad a\mapsto \xi(a,...,a)
\]
is well-defined on cohomology classes.

Similarly, if $\partial(\xi)=0$ in $\O(n)\otimes_{\Sigma_n} ({\rm triv})$ then $\xi$ induces a well-defined operation on even cohomology classes.
\end{theorem}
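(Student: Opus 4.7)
The first observation is that, when $a$ has odd degree, the assignment $\xi \mapsto \xi(a,\dots,a)$ factors through the coinvariants $\O(n)\otimes_{\Sigma_n}(\mathrm{sign})$: applying the Koszul sign rule to $n$ identical odd inputs gives $(\xi\sigma)(a,\dots,a) = \mathrm{sign}(\sigma)\,\xi(a,\dots,a)$ for every $\sigma \in \Sigma_n$, so any element of the form $y - \mathrm{sign}(\sigma)\,y\sigma$ evaluates to zero on $(a,\dots,a)$. The analogous factoring through $\O(n)\otimes_{\Sigma_n}(\mathrm{triv})$ holds when $a$ has even degree. With this in hand, the cycle property is immediate from the Leibniz rule: one has $\partial(\xi(a,\dots,a)) = (\partial\xi)(a,\dots,a)$, because the remaining Leibniz terms each contain a factor of $\partial a = 0$; and the right-hand side vanishes by the hypothesis that $\partial\xi$ is zero in the coinvariants together with the factoring just noted.

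For the independence of representative, let $a' = a + \partial(b)$, and the goal is to produce $w$ with $\xi(a',\dots,a')-\xi(a,\dots,a) = \partial(w)$. The plan is to introduce an auxiliary dg $\O$-algebra $A\otimes \Omega$, where $\Omega$ is a commutative dga modelling the interval with two evaluations $\varepsilon_0,\varepsilon_1 \colon \Omega \to k$, and to produce an odd interpolating cycle $\alpha \in A\otimes\Omega$ whose two specialisations are $a$ and $a'$. In characteristic zero, the concrete choice $\Omega = k[t,dt]$ with $|t|=0$, $|dt|=1$, $d(t) = dt$, and $\alpha = a\otimes 1 + \partial(b)\otimes t + b\otimes dt$ does the job: a short computation shows $\alpha$ is a cycle, and its total degree is odd. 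Applying the first part of the proof inside $A\otimes\Omega$, $\xi(\alpha,\dots,\alpha)$ is a cycle whose images under $\varepsilon_0$ and $\varepsilon_1$ are $\xi(a,\dots,a)$ and $\xi(a',\dots,a')$, and the desired $w$ would be extracted from the acyclicity of $\Omega$ in positive degree.

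The main obstacle, and the precise point where the $\Sigma$-splitting of $\O$ becomes essential, is that in positive characteristic $k[t,dt]$ is not acyclic---for example $d(t^p) = p\,t^{p-1}dt = 0$, so $t^p$ is a nonzero cocycle---hence the two endpoint evaluations are not quasi-isomorphisms and the heuristic above cannot be completed as stated. To repair this I would appeal to Hinich's theorem \cite{Hinich}: when $\O$ is $\Sigma$-split, the category of dg $\O$-algebras admits a closed model structure equipped with functorial cylinder and path objects, which play the role of $A\otimes k[t,dt]$ but with the correct homotopical properties. Applying the cycle argument of the first paragraph inside such a cylinder and pushing the resulting cohomology class forward along the two endpoint weak equivalences identifies $[\xi(a,\dots,a)]$ with $[\xi(a',\dots,a')]$ in $\mathrm{H}^{ni+j}(A)$, extracting the desired $w$. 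The even-degree case is formally identical, with all Koszul signs trivialised.
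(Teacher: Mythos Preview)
Your first paragraph is correct and matches the paper: the factoring of $\xi\mapsto\xi(a,\dots,a)$ through $\O(n)\otimes_{\Sigma_n}({\rm sign})$ for odd $a$, together with the Leibniz rule, gives the cycle property.

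For independence of representative your route is genuinely different from the paper's, and it carries a gap. You invoke Hinich's model structure to replace $A\otimes k[t,dt]$ by an abstract path object $P$, but you do not explain how to produce the interpolating cycle $\alpha\in P$ there; the explicit formula $a\otimes 1+\partial(b)\otimes t+b\otimes dt$ has no meaning in a general $P$. This is fixable: given a factorisation $A\xrightarrow{s}P\xrightarrow{(\varepsilon_0,\varepsilon_1)}A\times A$ with $s$ a quasi-isomorphism and $(\varepsilon_0,\varepsilon_1)$ surjective, lift $(0,-b)$ to some $\beta\in P^{i-1}$ and set $\alpha=s(a)+\partial\beta$; then $\alpha$ is a cycle with $\varepsilon_0(\alpha)=a$ and $\varepsilon_1(\alpha)=a'$. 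After that your argument goes through, since $\varepsilon_0$ and $\varepsilon_1$ are $\O$-algebra maps inducing the same isomorphism $(s_*)^{-1}$ on cohomology.

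The paper's argument is considerably more elementary and avoids model categories. It introduces the one-dimensional complex $I$ in degree $i$ and the subcomplex $V\subset A$ spanned by $a,a',b$, so that $a,a'\colon I\rightrightarrows V$ are both split by the same retraction $r\colon V\to I$. The $\Sigma$-split hypothesis is then used \emph{directly}: it guarantees that $\O(n)\otimes_{\Sigma_n}(r^{\otimes n})$ is a quasi-isomorphism, so the two maps $\O(n)\otimes_{\Sigma_n}(a^{\otimes n})$ and $\O(n)\otimes_{\Sigma_n}(a'^{\otimes n})$, being sections of the same quasi-isomorphism, agree on cohomology. Your approach buries the same mechanism inside Hinich's machinery (his model structure exists precisely because $\Sigma$-splitness makes such coinvariant functors homotopically well-behaved), whereas the paper isolates the one place it is actually needed.
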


The next theorem originates in topology, in the work Cohen \cite{Cohen}. The arguments of \cite{Cohen} are purely topological calculations concerning double loops spaces. The algebraic version here is essentially due to Turchin. 

\begin{theorem}[Cohen, Turchin]\label{Cohenthm}
Let $p$ be a prime and let $\B$ be the brace operad over $\mathbb{F}_p$. Denote by 
$\xi = (\-)\{\-\}\cdots\{\-\}$ the $p$-fold iterated brace operation (in Gerstenhaber's notation $\xi = ((\-\circ \-)\circ \cdots \circ \- ) \circ \-$).  Then $\partial(\xi)$ becomes zero in $\B(p)\otimes_{\Sigma_p} ({\rm sign})$. Hence $\xi$ induces a well-defined, $p$-power linear operation on the homology of any $\B$-algebra.
\end{theorem}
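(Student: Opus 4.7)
The plan is to compute $\partial(\xi)$ explicitly as an element of $\mathcal{B}(p)$ and then to show that each term of the result is killed upon tensoring with the sign representation of $\Sigma_p$. The element $\xi = \xi_p$ can be unpacked as the left-normed iterated circle product
\[
\xi_p = (\cdots((x_1 \circ x_2) \circ x_3) \cdots ) \circ x_p,
\]
where $a \circ b = a\{b\}$ is Gerstenhaber's pre-Lie operation. The key ingredient is the universal identity
\[
\partial(a \circ b) = (\partial a)\circ b + (-1)^{|a|} a \circ (\partial b) + (-1)^{|a|}\bigl(a \smile b - (-1)^{|a||b|} b \smile a\bigr),
\]
which holds in any $\mathcal{B}$-algebra and hence at the level of $\mathcal{B}$ itself. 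Applying Leibniz iteratively to $\xi_p$, and noting that the first two terms on the right vanish when the $x_i$ are viewed as formal generators of degree zero (which is all that matters for a universal operadic statement), I would obtain the expression
\[
\partial(\xi_p) \;=\; \sum_{k=1}^{p-1} \epsilon_k \,\bigl(\xi_k \smile x_{k+1} \,-\, (-1)^{|\xi_k||x_{k+1}|}\, x_{k+1}\smile \xi_k\bigr) \circ x_{k+2}\circ\cdots\circ x_p,
\]
where $\xi_k = (\cdots (x_1\circ x_2)\cdots)\circ x_k$ is the partial iterated brace and $\epsilon_k \in \{\pm 1\}$ records the Koszul sign.

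Next, I would pair each summand with its image under the adjacent transposition $\tau_k = (k,\,k+1) \in \Sigma_p$, which acts on $\mathcal{B}(p)$ by relabeling the $k$-th and $(k+1)$-th slots. The two halves of the graded commutator $\xi_k \smile x_{k+1} - (\pm)\, x_{k+1}\smile \xi_k$ are exchanged by $\tau_k$ (up to a Koszul sign), modulo terms that collapse under the brace relations in $\mathcal{B}$. Consequently, each summand of $\partial(\xi_p)$ has, after rewriting in a canonical basis, the form $v - \tau_k v$ for some $v \in \mathcal{B}(p)$. Since $\mathrm{sgn}(\tau_k) = -1$, such an element maps to zero under the projection $\mathcal{B}(p) \twoheadrightarrow \mathcal{B}(p)\otimes_{\Sigma_p}(\mathrm{sign})$, because $v \otimes 1 \sim \mathrm{sgn}(\tau_k)(\tau_k v)\otimes 1 = -(\tau_k v)\otimes 1$. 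This shows that $\partial(\xi)$ vanishes in the sign-twisted coinvariants. Combining this with Theorem \ref{Maythm} applied to $\xi_p$ and an odd-degree cycle $a$ immediately gives the well-definedness of $a \mapsto \xi_p(a,\ldots,a) = a^{[p]}$ on cohomology classes, establishing the final clause of the theorem.

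The main obstacle lies in the pairing step: the transposition $\tau_k$ does not act on nested brace expressions in a naively leaf-permuting way, because brace operations depend sensitively on the cyclic ordering of their slotted arguments, and the "commutator" structure of the cup product mixes awkwardly with deeper braces in $\xi_k$. The hardest part will therefore be choosing an appropriate canonical presentation of $\mathcal{B}(p)$ (for instance, via brace trees or surjection diagrams) in which the action of $\tau_k$ is transparent, and verifying that the terms produced by the Leibniz expansion pair up \emph{exactly} rather than only up to further coboundaries. This combinatorial verification is, I expect, the core of Turchin's algebraic refinement of Cohen's original topological computation in $H_*(\mathrm{Conf}_p(\mathbb{R}^2);\mathbb{F}_p)$; if the direct approach proves too delicate, one could instead invoke the $\Sigma$-split quasi-isomorphism of $\mathcal{B}$ with the $E_2$-operad and reduce the claim to that topological calculation.
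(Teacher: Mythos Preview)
Your argument contains a decisive sign error in the coinvariants step. In $\mathcal{B}(p)\otimes_{\Sigma_p}(\mathrm{sign})$ the relation is $v\tau \sim \mathrm{sign}(\tau)\,v$, so for a transposition $\tau_k$ one has $\tau_k v \sim -v$ and hence $v-\tau_k v \sim 2v$, \emph{not} zero. (It is $v+\tau_k v$ that vanishes.) You can see this already for $n=2$: relation~(\ref{rel1}) gives $\partial(\xi^{(2)})=(\-\smile\-)-(\-\smile\-)\tau_1$, which is exactly of the form $v-\tau_1 v$, and in the sign coinvariants this becomes $2(\-\smile\-)\neq 0$ when $p\neq 2$. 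So even if your Leibniz expansion and pairing were correct, the conclusion would not follow.

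This also explains why your outline never invokes the hypothesis $n=p$: had the pairing argument worked, it would show $\partial(\xi^{(n)})=0$ in $\mathcal{B}(n)\otimes_{\Sigma_n}(\mathrm{sign})$ for \emph{every} $n$, which is false. The paper's proof proceeds quite differently: one shows by induction, using (\ref{rel1}) and the distributivity relation (\ref{rel2}), that in the sign coinvariants
\[
\partial(\xi^{(n)})=\sum_{i=1}^{n-1}(-1)^{ni}\binom{n}{i}\,\xi^{(i)}\smile\xi^{(n-i)}.
\]
The terms do not cancel in pairs; rather, each pass through the induction step combines a $\binom{n}{i-1}$ with a $\binom{n}{i}$ via Pascal's rule. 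Only at $n=p$ do the coefficients $\binom{p}{i}$ vanish in $\mathbb{F}_p$, and this is where the prime enters. Your proposed pairing by transpositions cannot reproduce this binomial-coefficient mechanism, so the approach would need to be abandoned rather than repaired.
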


\begin{proof}[Proof of Theorem \ref{Maythm}.] 
We only prove the first statement, involving the sign representation. The statement about the trivial representation is proven the same way.

So assume that $\partial(\xi)=0$ in $\O(n)\otimes_{\Sigma_n} ({\rm sign})$, and take two cycles $a,a'\in A^i$ and $b\in A^{i-1}$ with $\partial(b)=a-a'$. Our goal is to show that $\xi(a,...,a)$ and $\xi(a',...,a')$ are cohomologous cycles.

Let $I$ be a one-dimensional vector space concentrated in degree $i$. We identify $a$ and $a'$ with the corresponding maps $I\to A$, and we consider the induced maps
\[
\begin{tikzcd}
 \O(n)\otimes_{\Sigma_n}(I^{\otimes n}) \ar[r, shift left=3pt, "\O(n)\otimes_{\Sigma_n}(a^{\otimes n})"] \ar[r, shift right=3pt, "\O(n)\otimes_{\Sigma_n}(a'^{\otimes n})"']  &[10mm] \O(n)\otimes_{\Sigma_n} A^{\otimes n} \ar[r,"\mu"] & A.
\end{tikzcd}
\]
Here $\mu$ is the given $\O$-action on $A$. Since $i$ is odd $I^{\otimes n}\cong({\rm sign})$ as left $\Sigma_n$-modules, so by  assumption $\xi$ represents a class in $\O(n)\otimes_{\Sigma_n}(I^{\otimes n})$. By construction
\[
\mu (\O(n)\otimes_{\Sigma_n}a^{\otimes n})(\xi) = (-1
)^{nj}\xi(a,...,a)\ \text{ and }\ \mu (\O(n)\otimes_{\Sigma_n}a'^{\otimes n})(\xi) = (-1
)^{nj}\xi(a',...,a').
\]
Hence it suffices to show that $\O(n)\otimes_{\Sigma_n}(a^{\otimes n})$ and $\O(n)\otimes_{\Sigma_n}(a'^{\otimes n})$ induce the same map in cohomology. 

Let $V$ be the linear span in $A$ of $a,a'$ and $b$. This is a subcomplex of $A$ (with dimension $3$ or less). Both $a,a'\colon I\rightrightarrows V$ are quasi-isomorphisms, and both are split by the retraction $r:V\to I$ with $r(b)=0$ and $r(a)=r(a')=$ the generator of $I$. Since $r$ is a quasi-isomorphism, so is $r^{\otimes n}: V^{\otimes n}\to I^{\otimes n}$. Since $\O$ is $\Sigma$-split, $\O(n)\otimes_{\Sigma_n}(r^{\otimes n})$ is also a quasi-isomorphism. Finally, since $\O(n)\otimes_{\Sigma_n}(a^{\otimes n})$ and $\O(n)\otimes_{\Sigma_n}(a'^{\otimes n})$ (or strictly speaking, the induced maps into $\O(n)\otimes_{\Sigma_n}(V^{\otimes n})$) are both split by the same quasi-isomorphism $\O(n)\otimes_{\Sigma_n}(r^{\otimes n})$, they must induce the same map in cohomology.
\end{proof}

In the next proof we will need certain permutations $\tau_i\in \Sigma_n$ given by
\[
\tau_i(j)=\begin{cases} j & \text{ if } j< n-i\\
n & \text{ if } n-i\leq j< n\\
n-i & \text{ if } j=n
\end{cases}
\]
And we will use the fact that ${\rm sign}(\tau_i)=(-1)^i$. We also need to record some relations which hold in the brace operad:
\begin{equation}\label{rel1}
\partial(\-\{\-\})= (\-\smile\-) - (\-\smile\-)\tau_1
\end{equation}
\begin{equation}\label{rel2}
(\-\smile \-)\{ \-\}= (\-\smile( \- \{\-\})) + ((\-\{\-\})\smile\-)\tau_1,
\end{equation}
These can be found in \cite{GV} (they both go back to Gerstenhaber's earlier work \cite{Gerst2}).

\begin{proof}[Proof of Theorem \ref{Cohenthm}.] 
We denote $\xi^{(n)}=(\-)\{\-\}\cdots\{\-\}$ with $n$ inputs. In particular $\xi^{(1)}$ is the identity and $\xi^{(p)}$ is $\xi$ itself. Note that $\xi^{(n)}$ has degree $1-n$. 

We will prove by induction on $n$ that in $\B(n)\otimes_{\Sigma_n} ({\rm sign})$ we have
\begin{equation}\label{indhyp}
\partial(\xi^{(n)})=\sum_{i=1}^{n-1}(-1)^{ni}\binom{n}{i}\xi^{(i)}\smile \xi^{(n-i)}.
\end{equation}
This formula was discovered by Turchin (with different signs), see \cite[Proposition 11.1]{TourtchineII}. For completeness we include a detailed proof.  The base case uses the relation (\ref{rel1}):
\[
\partial(\xi^{(2)})=(\-\smile\-) - (\-\smile\-)\tau  = 2 \xi^{(1)}\smile \xi^{(1)}.
\]
For the induction step we use the Leibniz rule 
\begin{equation*}
\partial(\xi^{(n+1)})  = \partial(\xi^{(2)}\circ_1\xi^{(n)}) = \partial(\xi^{(2)})\circ_1\xi^{(n)} - \xi^{(2)}\circ_1\partial(\xi^{(n)}).
\end{equation*}
We expand the first summand here using (\ref{rel1}):
\begin{align*}
\partial(\xi^{(2)})\circ_1\xi^{(n)} & = (\-\smile\-)\circ_1\xi^{(n)}-((\-\smile\-)\tau_1)\circ_1\xi^{(n)}\\
& = (\-\smile\-)\circ_1\xi^{(n)}-(\-\smile\-)\circ_2\xi^{(n)} \tau_n\nonumber\\
& = \xi^{(n)}\smile \xi^{(1)}+(-1)^{n+1}\xi^{(1)}\smile \xi^{(n)}.\nonumber
\end{align*}
And also the second summand:
\begin{align*}
 \xi^{(2)}\circ_1\partial(\xi^{(n)}) 
    & =  \sum_{i=1}^{n-1}(-1)^{ni}\binom{n}{i}\xi^{(2)}\circ_1\left[\xi^{(i)}\smile \xi^{(n-i)}\right]\\
    & = \sum_{i=1}^{n-1}(-1)^{ni}\binom{n}{i}\left[\xi^{(i+1)}\smile \xi^{(n-i)}\tau_{n-i}+(-1)^{i-1}\xi^{(i)}\smile \xi^{(n-i+1)}\right]\nonumber\\
    & = \sum_{i=1}^{n-1}\binom{n}{i}\left[(-1)^{ni+n-i}\xi^{(i+1)}\smile \xi^{(n-i)}+(-1)^{ni+i-1}\xi^{(i)}\smile \xi^{(n-i+1)}\right].\nonumber
\end{align*}
The first equality is the induction hypothesis; the second one uses (\ref{rel2}) precomposed with $\xi^{(i)}\otimes \xi^{(n-i)}$; the last equality just groups signs together. 
If we combine these two summands and group together coefficients we find
\begin{align*}
\partial(\xi^{(2)})\circ_1&\xi^{(n)} - \xi^{(2)}\circ_1\partial(\xi^{(n)})   \\
       & = -\sum_{i=1}^{n}\left[(-1)^{n(i-1)+n-i+1}\binom{n}{i-1}+(-1)^{ni+i-1}\binom{n}{i}\right]\xi^{(i)}\smile \xi^{(n+1-i)}\\
       & = \sum_{i=1}^{n}(-1)^{(n+1)i}\left[\binom{n}{i-1}+\binom{n}{i}\right]\xi^{(i)}\smile \xi^{(n+1-i)}\\
       & = \sum_{i=1}^{n}(-1)^{(n+1)i}\binom{n+1}{i}\xi^{(i)}\smile \xi^{(n+1-i)}.
\end{align*}
This completes the proof by induction that (\ref{indhyp}) holds. Since $\binom{p}{i}=0$ for $i=1,...,p-1$ 
this completes as well the proof that $\partial(\xi)=0$ in $\B(p)\otimes_{\Sigma_p}({\rm sign})$.

By May's theorem \ref{Maythm} above, $\xi$ induces a well-defined cohomology operation on any $\B$-algebra.
\end{proof}


\begin{thebibliography}{}
 
\bibitem{Avramov} L. L. Avramov, 
{\em Infinite free resolutions. Six lectures on commutative algebra},
Progr. Math. {\bf 166}, (1998) 1--118.


\bibitem{AI} L. L. Avramov, S. B. Iyengar,
{\em Gorenstein algebras and Hochschild cohomology},
Special volume in honor of Melvin Hochster. Michigan Math. J. {\bf 57} (2008), 17--35.

\bibitem{BKL}
D. Benson, R. Kessar, M. Linckelmann, 
{\em On blocks of defect two and one simple module, and Lie algebra structure of $\mathrm{HH}^1$}, 
J. Pure Appl. Algebra {\bf 221} (2017), no. 12, 2953--2973. 

\bibitem{BKL2} D. Benson, R. Kessar, M. Linckelmann
{\em On the BV structure of the Hochschild cohomology of finite group algebras}, To appear in Pacific J. Math,
arXiv:2005.01694.

\bibitem{Bouc} S. Bouc, 
{\em Bimodules, trace g\'en\'eralis\'ee, et transfers en homologie de Hochschild}.
Preprint, (1997). Available on http://people.math.jussieu.fr/{$\sim$}bouc/

\bibitem{BR}
B. Briggs, L. Rubio y Degrassi
{\em Maximal Tori in $\mathrm{HH}^1$ and the fundamental group}, arXiv:2109.03704v1.

\bibitem{B} M. Brou\'e, 
{\em Equivalences of blocks of group algebras},
 in: Finite dimensional
algebras and related topics, Kluwer, (1994), 1--26.

\bibitem {Buc} R.-O. Buchweitz, 
{\em Maximal Cohen–Macaulay modules and Tate cohomology over Gorenstein rings,} 
http:hdl.handle.net/1807/16682 (1986), 155 pp.

\bibitem{Chang} H. Chang,
{\em On the first Hochschild cohomology of cocommutative Hopf algebras of finite representation type}, 
arXiv:1907.04093.

\bibitem {Cohen}  F. Cohen, 
{\em The homology of $C^{n+1}-spaces$, $n \geq 0$},
from “The homology of iterated loop spaces”, part of Lecture Notes in Mathematics 533, Springer, Berlin.

\bibitem{ER} F. Eisele,  T. Raedschelders,
{\em On the solvability of the first Hochschild cohomology of a finite-dimensional
algebra}, 
arXiv:1903.07380.

\bibitem{Gerst2} M. Gerstenhaber, 
{\em The Cohomology Structure of an Associative Ring,} 
Ann. of Math. (2) {\bf 78} (1963), 267--288.

\bibitem{Gerst} M. Gerstenhaber, 
{\em On the deformation of rings and algebras. III,} 
Ann. of Math. (2) {\bf 88} (1968), 1--34.

\bibitem{GV} M. Gerstenhaber,  A. A. Voronov, 
{\em Homotopy G-algebras and moduli space},
Int. Math. Res. Not. no. 3, 1995, 141--153, 

\bibitem{Getzler} E. Getzler, 
{\em Cartan homotopy formulas and the Gauss Manin connection in cyclic homology}, Israel Math. Conf. Proc. 7 (1993).

\bibitem{GJ} E. Getzler, J.D.S. Jones, 
{\em Operads, homotopy algebra, and iterated integrals for double loop spaces},
(hep-th/9403055).

\bibitem{Hinich} V. Hinich, 
{\em Homological algebra of homotopy algebras.}, 
Comm. Algebra 25 (1997), {\bf 10}, 3291--3323.


\bibitem{KellerI} B. Keller, 
{\em Derived invariance of higher structures on the Hochschild complex},
preprint, https://webusers.imj-prg.fr/~bernhard.keller/publ/dih.pdf, 2003.

\bibitem {KellerIII} B. Keller, 
{\em Hochschild cohomology and derived Picard groups},
J. Pure Appl. Algebra
{\bf 190}, Issues 1–3, (2004), 177--196.

\bibitem{KellerII} B. Keller, 
{\em Singular Hochschild cohomology via the singularity category},
C. R. Math. Acad. Sci. Paris 356 (2018), no. 11--12, 1106--1111. 

\bibitem{KellerIV} B. Keller, 
{\em A remark on Hochschild cohomology and Koszul duality},
https://arxiv.org/pdf/1911.00965.pdf (2019)

\bibitem{KV}B. Keller, D. Vossieck, 
{\em Sous les catégories d\'erivées}, 
C. R. Acad. Sci. S\'erie I Math\'ematique {\bf 305} (6) (1987) 225--228.


\bibitem {Kess} R. Kessar, 
{\em On blocks stably equivalent to a quantum complete intersection of dimension 9 in characteristic
3 and a case of the Abelian defect group conjecture},
J. London Math. Soc. (2) {\bf 85} (2012), 491--510.

\bibitem {KL} R. Kessar and M. Linckelmann, 
{\em On stable equivalences and blocks with one simple module},
J. Algebra {\bf 323} (2010), 1607--1621.

\bibitem{KLZ} S. Koenig, Y. Liu, and G. Zhou,
{\em Transfer maps in Hochschild (co)homology and applications to stable and derived invariants and to the Auslander-Reiten Conjecture.} 
Trans. Amer. Math. Soc. {\bf 364} (2012), no. 1, 195--232. 

\bibitem {Knorrer}  H. Kn\"orrer,
{\em Cohen-Macaulay modules on hypersurface singularities}, 
Representations of Algebras, London Mathematical Society Lecture Note Series, 
vol. 116,  Cambridge University Press, 1986.

\bibitem{Linc} M. Linckelmann,
{\em Transfer in Hochschild cohomology of blocks of finite groups.}
Algebr. Represent. Theory {\bf 2} (1999), no. 2, 107--135.

\bibitem{LiRu1} M. Linckelmann and L. Rubio y Degrassi, 
{\em Block 
algebras with ${\rm HH}^1$ a simple Lie algebra},
Quart. J. Oxford {\bf 69}
(2018), 1123--1128.


\bibitem{LiRu2} M. Linckelmann, L. Rubio y Degrassi,
{\em On the Lie algebra structure of $\mathrm{HH}^1(A)$ of a finite-dimensional algebra
{$A$}}, 
Proc. Amer. Math. Soc. 148 (2020), no. 5, 1879--1890.
 
\bibitem{LZZ} Y. Liu, G. Zhou, and A. Zimmermann, {\em Higman ideal, stable Hochschild homology and Auslander-Reiten conjecture}, Math Z., {\bf 270}, Issue 3--4, (2012), 759--781.

\bibitem{May} J. P. May,
{\em A general algebraic approach to Steenrod operations.}
The Steenrod Algebra and its Applications (Proc. Conf. to Celebrate N. E. Steenrod's Sixtieth Birthday 1970), (1970), 
Lecture Notes in Mathematics, {\bf 168}, 153--231.

\bibitem{McClureSmith} J. E. McClure and Jeffrey H. Smith. 
{\em A solution of Deligne’s Hochschild cohomology conjecture.}
Recent progress in homotopy theory (Baltimore, MD, 2000), (2002), Contemp. Math., {\bf 293}, 153–193. 

\bibitem{Mitchell} B. Mitchell,
{\em Rings with several objects.}
Advances in Math. {\bf 8}, (1972), 1--161.

\bibitem{Orl} D. O. Orlov,
{\em  Triangulated categories of singularities and D-branes in Landau-Ginzburg models},
Tr. Mat. Inst. Steklova 246 (2004), no. Algebr. Geom. Metody, Svyazi i Prilozh., 240--262.

\bibitem {Quillen} D. Quillen,
{\em Cyclic cohomology and algebra extensions},
K-Theory {\bf 3}, (1989): 205--246.

\bibitem{Ri} J. Rickard, 
{\em Derived categories and stable equivalence}, 
J. Pure Appl. Algebra {\bf 61} (1989) 303--317.

\bibitem{Rou} R. Rouquier, 
{\em Block theory via stable and Rickard equivalences}, in: Modular Representation Theory of Finite Groups, Charlottesville,
VA, 1998, de Gruyter, Berlin, 2001, pp. 101--146.

\bibitem{RyD} L. Rubio y Degrassi, 
{\em Invariance of the restricted p-power map on integrable derivations under stable equivalences}, 
J. Algebra {\bf 469} (2017), 288--301.

 \bibitem{RSS} L. Rubio y Degrassi, S. Schroll, A. Solotar
 {\em The first Hochschild cohomology as a Lie algebra},
 arXiv:1903.12145.
 
\bibitem {Stasheff} J. D. Stasheff, 
{\em The intrinsic bracket on the deformation complex of an associative algebra},
J. Pure Appl. Algebra {\bf 89}, Issues 1–2,(1993), 231--235.

\bibitem{StraFar} H. Strade and R. Farnsteiner,
{\em Modular Lie Algebras And Their Representations},
Monogr. Textbooks Pure Appl. Math. 116, Marcel Dekker, 1988.

\bibitem {Taillefer} R. Taillefer, 
{\em First Hochschild cohomology group and stable equivalence classification of Morita type of some tame symmetric algebras,} 
Homology Homotopy Appl. {\bf 21} (2019) Number 1, 19--48.

\bibitem {Toen} B. To\"en,
{\em The homotopy theory of dg-categories and derived Morita theory.} 
Invent. Math. {\bf 167} (2007), no. 3, 615--667. 

\bibitem{Tourtchine} V. Tourchine, 
{\em Dyer–Lashof–Cohen operations in Hochschild cohomology,}
Algebr. Geom. Topol. {\bf 6}, Number 2 (2006), 875--894.

\bibitem{TourtchineII}  V. Tourchine, 
{\em On the other side of the bialgebra of chord diagrams,} 
J. Knot Theory Ramifications {\bf 16} (2007), no. 5, 575--629.

\bibitem {Wang} Z. Wang,
{\em Gerstenhaber algebra and Deligne’s conjecture on Tate-Hochschild cohomology},
accepted by Trans. Amer. Math. Soc. (2019).
 
 \bibitem{X} C. Xi, 
{\em Stable equivalences of adjoint type}, 
Forum Math 20 (2008), 81--97.

\bibitem{Zaks} A. Zaks, 
{\em Injective dimension of semi-primary rings},
J. of Algebra {\bf 13}, Issue 1, (1969), Pages 73--86

\bibitem {ZZ} G. Zhou, A. Zimmermann, 
{\em Classifying tame blocks and related algebras up to stable equivalences
of Morita type,} 
J. Pure Appl. Algebra 215 (2011), 1969--2986.

\bibitem{ZII} A. Zimmermann 
{\em Fine Hochschild invariants of derived categories for symmetric algebras},
J.  Algebra {\bf 308} (2007), 350--367.

\bibitem{ZR} A. Zimmermann, 
{\em Representation Theory. A Homological Algebra Point of View}, Springer, (2014).

\bibitem {ZI} A. Zimmermann, 
{\em  K\"ulshammer ideals of algebras of quaternion type},
J. Algebra Appl. {\bf 17} No. 08, (2018).

\end{thebibliography}
\end{document}